\newcommand{\nullform}{\mathfrak B_{\theta_{12}}}
\newcommand{\truenullform}{\mathfrak B_{\theta_{12} \ll 1}}
\newcommand{\Nmin}{N_{\mathrm{min}}}
\newcommand{\Lmin}{L_{\mathrm{min}}}
\newcommand{\Lmed}{L_{\mathrm{med}}}
\newcommand{\Lmax}{L_{\mathrm{max}}}
\newcommand{\hypwt}{\mathfrak h}
\newcommand{\abs}[1]{\left\vert #1 \right\vert}
\newcommand{\bigabs}[1]{\bigl\vert #1 \bigr\vert}
\newcommand{\norm}[1]{\left\Vert #1 \right\Vert}
\newcommand{\bignorm}[1]{\bigl\Vert #1 \bigr\Vert}
\newcommand{\Bignorm}[1]{\Bigl\Vert #1 \Bigr\Vert}
\newcommand{\N}{\mathbb{N}}
\newcommand{\Proj}{\mathbf{P}}
\newcommand{\R}{\mathbb{R}}
\DeclareMathOperator{\supp}{supp}
\newtheorem{theorem}{Theorem}[section]
\newtheorem{lemma}{Lemma}[section]
\theoremstyle{definition}
\theoremstyle{remark}
\title[Bilinear estimates]{Bilinear Fourier restriction estimates related to the 2d wave equation}
\author{Sigmund Selberg}
\address{Department of Mathematical Sciences\\ Norwegian University of Science and Technology\\ Alfred Getz' vei 1\\ N-7491 Trondheim\\ Norway}
\email{sselberg@math.ntnu.no}
\urladdr{www.math.ntnu.no/~sselberg}
\thanks{Supported by Research Council of Norway, grant no.~185359}
\subjclass[2000]{35L05, 42B37}
\begin{document}

\begin{abstract}
We study bilinear $L^2$ Fourier restriction estimates which are related to the 2d wave equation in the sense that we restrict to subsets of thickened null cones. In an earlier paper we studied the corresponding 3d problem, obtaining several refinements of the Klainerman-Machedon type estimates. The latter are bilinear generalizations of the $L^4$ estimate of Strichartz for the 3d wave equation. In 2d there is no $L^4$ estimate for solutions of the wave equation, but as we show here, one can nevertheless obtain $L^2$ bilinear estimates for thickened null cones, which can be viewed as analogues of the 3d Klainerman-Machedon type estimates. We then prove a number of refinements of these estimates, analogous to those we obtained earlier in 3d. The main application we have in mind is the Maxwell-Dirac system.
\end{abstract}

\maketitle


\section{Introduction}\label{A}

We are interested in bilinear $L^2$ Fourier restriction estimates on $\R^{1+2}$ of the form
\begin{equation}\label{Bilinear}
  \norm{\Proj_{A_0}(\Proj_{A_1} u_1 \cdot \Proj_{A_2} u_2)}
  \le C \norm{\Proj_{A_1} u_1} \norm{\Proj_{A_2} u_2}
  \qquad \forall u_1,u_2 \in \mathcal S(\R^{1+2}),
\end{equation}
for given measurable sets $A_0,A_1,A_2 \subset \R^{1+2}$. Here $\norm{\cdot}$ is the norm on $L^2(\R^{1+2})$ and the multiplier $\Proj_{A}$ is defined by $\widehat{\Proj_A u} = \chi_A \widehat u$, where $\chi_A$ is the characteristic function of $A$ and
$$\widehat u(\tau,\xi) = \mathcal F u(\tau,\xi) = \iint e^{-i(t\tau + x\cdot\xi)} u(t,x) \, dt \, dx$$ is the Fourier transform on $\R^{1+2}$. More specifically we are interested in estimates related to the 2d wave equation
$$
  \square u = 0 \qquad (\square = -\partial_t^2 + \Delta_x,\; t \in \R, \; x \in \R^2),
$$
in the sense that $A_1$ and $A_2$ (and possible also $A_0$) are thickened subsets of the null cone $K = \{ (\tau,\xi) \in \R^{1+2} \colon \abs{\tau} = \abs{\xi} \}$ (the characteristic set of $\square$). We introduce the following notation for thickened upper ($\tau \ge 0$) and lower ($\tau \le 0$) cones, truncated in the spatial frequency $\xi$ by balls, annuli and angular sectors:
\begin{align*}
  K^{\pm}_{N,L} &= \left\{ (\tau,\xi) \in \R^{1+2} \colon \abs{\xi} \lesssim N,\;\; \tau=\pm\abs{\xi}+O(L) \right\},
  \\
  \dot K^{\pm}_{N,L} &= \left\{ (\tau,\xi) \in \R^{1+2} \colon \abs{\xi} \sim N,\;\; \tau=\pm\abs{\xi}+O(L) \right\},
  \\
  \dot K^{\pm}_{N,L,\gamma}(\omega)
  &= \left\{ (\tau,\xi) \in \dot K^{\pm}_{N,L} \colon \theta(\pm\xi,\omega) \le \gamma \right\},
\end{align*}
where $N, L, \gamma > 0$, $\omega \in \mathbb S^1$ (the unit circle) and $\theta(a,b)$ denotes the angle between nonzero $a,b \in \R^2$. We use the shorthand $X \lesssim Y$ for $X \le CY$, where $C \gg 1$ is some absolute constant; $X=O(R)$ is short for $\abs{X} \lesssim R$; $X \sim Y$ means $X \lesssim Y \lesssim X$; $X \ll Y$ stands for $X \le C^{-1} Y$.

In the paper \cite{Selberg:2008c} we studied the corresponding problem on $\R^{1+3}$, obtaining several refinements of the well-known Klainerman-Machedon type estimates (see \cite{Klainerman:1993, Klainerman:1996, Foschi:2000}), which are bilinear generalizations of the $L^4$ estimate of Strichartz \cite{Strichartz:1977} for the 3d wave equation. All these estimates have an equivalent statement as Fourier restriction estimates for thickened cones. For example, Strichartz' estimate can be formulated as (in this paragraph all norms are taken on $\R^{1+3}$ instead of $\R^{1+2}$)
$$
  \norm{\Proj_{K^{\pm}_{N,L}} u}_{L^4} \lesssim N^{1/2} L^{1/2} \norm{u}  \qquad \forall u \in \mathcal S(\R^{1+3}),
$$
and the Klainerman-Machedon type estimates generalize this to
$$
  \norm{\Proj_{\abs{\xi} \lesssim N_0} \left( \Proj_{K^{\pm_1}_{N_1,L_1}} u_1 \cdot \Proj_{K^{\pm_2}_{N_2,L_2}} u_2 \right)}
  \lesssim \left( \Nmin^{012} \Nmin^{12} L_1 L_2 \right)^{1/2}
  \norm{u_1}\norm{u_2}
$$
for all $u_1, u_2 \in \mathcal S(\R^{1+3})$. Here $\Nmin^{012}$ stands for the minimum of $N_j$ for $j=0,1,2$, and similarly for $\Nmin^{12}$. The corresponding estimates for solutions of the wave equation appear when one sends the $L$'s to zero, the key point being that all the $L$'s in the right hand sides are to the power $1/2$.

Our aim here is to prove analogous results in 2d. At first glance this may seem not to make sense, since there is no $L^4$ estimate for solutions of the 2d wave equation. However, one can still prove estimates for thickened cones, although they will not correspond to estimates for solutions of the wave equation. For example, we show that
$$
  \norm{\Proj_{K^{\pm}_{N,L}} u}_{L^4} \lesssim N^{3/8} L^{3/8} \norm{u}  \qquad \forall u \in \mathcal S(\R^{1+2}),
$$
and more generally,
\begin{multline}\label{2dMain}
  \norm{\Proj_{\abs{\xi} \lesssim N_0} \left( \Proj_{K^{\pm_1}_{N_1,L_1}} u_1 \cdot \Proj_{K^{\pm_2}_{N_2,L_2}} u_2 \right)}
  \\
  \lesssim \left[ \Nmin^{012} (\Nmin^{12})^{1/2} \Lmin^{12} (\Lmax^{12})^{1/2} \right]^{1/2}
  \norm{u_1}\norm{u_2}
\end{multline}
for all $u_1, u_2 \in \mathcal S(\R^{1+3})$. Since the mean of the powers of the $L$'s in the right hand side is $3/8$, which is smaller than $1/2$, we do not obtain any estimate for solutions of the 2d wave equation by letting the $L$'s tend to zero. This is not a problem, however, since the application we have in mind is regularity theory for nonlinear wave equations (specifically the Maxwell-Dirac system), where one needs estimates for the nonlinear terms in $X^{s,b}$ spaces adapted to the null cone, the building blocks of which are estimates like \eqref{2dMain}.

The wave-adapted $X^{s,b}$ spaces arise naturally if one considers the problem of applying a dispersive estimate for free waves to a nonlinear wave equation, the idea being to express the iterates of the nonlinear problem as a kind of superposition of free waves by foliating the Fourier space using translates of cones. Because of this, the traditional way of deriving estimates for the nonlinear terms in $X^{s,b}$ spaces has been to start from a dispersive estimate for free waves and then derive the $X^{s,b}$ estimates from it by the foliation procedure (see e.g.~the survey \cite{Selberg:2002b}). But in this way one would never obtain \eqref{2dMain}, of course, since it simply does not correspond to any free wave estimate. To prove \eqref{2dMain} (and its various refinements stated in the next section) we use a direct approach based on the Cauchy-Schwarz inequality to reduce to estimating the $1+2$-dimensional volume of the intersection of various subsets of thickened cones, often combined with additional angular decompositions and various orthogonality arguments. This approach to proving multilinear $X^{s,b}$ estimates in general was studied in detail in \cite{Tao:2001}, and we shall we use many of the techniques developed there.

It should be remarked that although there is no $L^4$ estimate for solutions of the 2d wave equation, one can nevertheless obtain $L^2$ bilinear estimates for such solutions by replacing the product $u_1u_2$ with a null form; see \cite{Klainerman:1996}. We prove some refined null form estimates in the next section.

We shall need the following two elementary lemmas, more general versions of which can be found in \cite{Tao:2001}. Proofs of the lemmas as stated here can be found in \cite{Selberg:2008c}. We denote by $\abs{E}$ the volume of a set $E \subset \R^{1+2}$ (resp.~the area of a set $E \subset \R^2$ or the length of an interval $E \subset \R$). We shall write $X=(\tau,\xi)$ for the space-time Fourier variable.

\begin{lemma}\label{B:Lemma1} The estimate \eqref{Bilinear} holds with $C^2$ equal to an absolute constant times
$$
  \min\left(\sup_{X \in A_0} \abs{A_1 \cap (X-A_2)},\sup_{X \in A_2} \abs{A_0 \cap (X+A_1)},\sup_{X \in A_1} \abs{A_0 \cap (X+A_2)}\right),
$$
provided that this quantity is finite.
\end{lemma}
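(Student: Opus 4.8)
The plan is to reduce the bilinear $L^2$ estimate \eqref{Bilinear} to a volume bound by expanding both sides in terms of Fourier transforms and applying the Cauchy-Schwarz inequality. First I would write the left-hand side using Plancherel's theorem: if $f_j = \widehat{\Proj_{A_j} u_j} = \chi_{A_j} \widehat{u_j}$ for $j = 1, 2$, then $\widehat{\Proj_{A_1} u_1 \cdot \Proj_{A_2} u_2}$ is the convolution $f_1 * f_2$ restricted by $\chi_{A_0}$, so the left side equals
\begin{equation*}
  \norm{\chi_{A_0} (f_1 * f_2)} = \left( \int_{A_0} \abs{\int f_1(Y) f_2(X - Y) \, dY}^2 \, dX \right)^{1/2}.
\end{equation*}
Since $f_1$ is supported in $A_1$ and $f_2(X - \cdot)$ is supported in $X - A_2$, the inner integral is over $A_1 \cap (X - A_2)$, so by Cauchy-Schwarz in $Y$,
\begin{equation*}
  \abs{\int f_1(Y) f_2(X - Y) \, dY}^2 \le \abs{A_1 \cap (X - A_2)} \int \abs{f_1(Y)}^2 \abs{f_2(X - Y)}^2 \, dY.
\end{equation*}

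Next I would bound $\abs{A_1 \cap (X - A_2)}$ by its supremum over $X \in A_0$, pull that constant out, and then integrate the remaining double integral over $X \in A_0 \subset \R^{1+2}$. By Tonelli's theorem the $X$-integral of $\int \abs{f_1(Y)}^2 \abs{f_2(X-Y)}^2 \, dY$ is at most $\norm{f_1}^2 \norm{f_2}^2 = \norm{\Proj_{A_1} u_1}^2 \norm{\Proj_{A_2} u_2}^2$, which gives \eqref{Bilinear} with $C^2$ equal to an absolute constant times $\sup_{X \in A_0} \abs{A_1 \cap (X - A_2)}$ (the absolute constant accounting for the normalization of the Fourier transform in Plancherel's theorem). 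This handles the first of the three quantities in the minimum.

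The other two quantities follow by symmetry: the trilinear form $\int_{\R^{1+2}} \Proj_{A_0} v \cdot \Proj_{A_1} u_1 \cdot \Proj_{A_2} u_2$ (for a test function $v$) is, up to complex conjugation, symmetric under permuting the roles of $A_0, A_1, A_2$ once one accounts for the sign in the Fourier variable, because on the Fourier side it becomes an integral over the hyperplane $X_0 = X_1 + X_2$ of $\widehat v(X_0) \widehat{u_1}(X_1) \widehat{u_2}(X_2)$ with each factor restricted to the corresponding $A_j$. Dualizing \eqref{Bilinear} against $v \in L^2$ and running the same Cauchy-Schwarz argument but integrating in $X_2 \in A_2$ (resp.\ $X_1 \in A_1$) instead of $X_0 \in A_0$ produces the bound with $\sup_{X \in A_2} \abs{A_0 \cap (X + A_1)}$ (resp.\ $\sup_{X \in A_1} \abs{A_0 \cap (X + A_2)}$). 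Taking the best of the three completes the proof.

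I do not expect any serious obstacle here; this is the standard Cauchy-Schwarz reduction and the only points requiring a little care are the bookkeeping of signs in the convolution constraint $X_0 = X_1 + X_2$ (which dictates whether one writes $X - A_2$ or $X + A_1$, etc.) and the justification of Fubini/Tonelli, which is immediate since all integrands are nonnegative and the $u_j$ are Schwartz. The mild subtlety worth stating explicitly is that the finiteness hypothesis on the minimum is exactly what makes the argument non-vacuous: one applies the Cauchy-Schwarz step using whichever of the three intersection volumes is controlled.
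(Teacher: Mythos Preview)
Your argument is correct and is precisely the standard Cauchy--Schwarz reduction. Note that the paper does not actually supply a proof of this lemma in the text; it refers the reader to \cite{Tao:2001} for more general versions and to \cite{Selberg:2008c} for a proof of the form stated here. The argument in those references is exactly the one you give: Plancherel, Cauchy--Schwarz on the inner convolution integral using the support constraint, then Tonelli on the outer integral; the remaining two bounds follow by writing the dual trilinear form on the Fourier side over the hyperplane $X_0=X_1+X_2$ and applying the same step with the roles of the indices permuted. Your bookkeeping of the signs (yielding $A_0\cap(X+A_1)$ and $A_0\cap(X+A_2)$ for the dualized versions) is correct.
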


In fact, under certain hypotheses on $A_0$ one can take the intersection of translates of all three sets at once. We say that $A_0$ is an \emph{approximate tiling set} if, for some lattice $E \subset \R^{1+2}$, the family of translates $X+A_0$ with $X \in E$ is a cover of $\R^{1+2}$ with \emph{$O(1)$ overlap}, in the sense that there exists $M \in \N$ such that for any $X \in E$, there are at most $M$ vectors $X' \in E$ such that $X'+A_0$ and $X+A_0$ intersect. But since $E$ is a lattice, this is equivalent to saying that $\left\{ X \in E : (X + A_0) \cap A_0 \neq \emptyset \right\}$ has cardinality at most $M$. Further, defining the \emph{doubling} $A_0^* = A_0 + A_0$, we say that $A_0$ has the \emph{doubling property} if the cover $\left\{ X + A^* \right\}_{X \in E}$ also has $O(1)$ overlap.

\begin{lemma}\label{B:Lemma2} Suppose $A_0$ is an approximate tiling set with the doubling property. Then \eqref{Bilinear} holds with
$$
  C^2
  \sim
  \sup_{X \in A_0, \; X' \in E} \bigabs{A_1 \cap (X-A_2) \cap (X'+A_0)},
$$
provided that this quantity is finite.
\end{lemma}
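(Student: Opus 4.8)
The plan is to deduce Lemma \ref{B:Lemma2} from Lemma \ref{B:Lemma1} by exploiting the tiling and doubling structure of $A_0$. First I would apply Lemma \ref{B:Lemma1} in its first form, so that it suffices to bound $\sup_{X \in A_0} \abs{A_1 \cap (X - A_2)}$. Fix $X \in A_0$. The point is to chop the set $A_1 \cap (X - A_2)$ into pieces using the cover $\{X' + A_0\}_{X' \in E}$: since $E$ is a lattice and the translates of $A_0$ cover $\R^{1+2}$ with $O(1)$ overlap, we have $\abs{A_1 \cap (X - A_2)} \lesssim \sum_{X' \in E} \abs{A_1 \cap (X - A_2) \cap (X' + A_0)}$, with the implied constant the overlap constant $M$. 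So the whole estimate reduces to showing that only $O(1)$ of the terms in this sum are nonzero, and that each is bounded by the claimed supremum.

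The second part is immediate: each summand is $\le \sup_{X \in A_0,\, X' \in E} \bigabs{A_1 \cap (X - A_2) \cap (X' + A_0)}$, which is exactly $C^2$. For the first part — the key step — I would argue that if $A_1 \cap (X - A_2) \cap (X' + A_0)$ is nonempty then $X'$ is confined to a bounded (in cardinality) subset of $E$. The natural way is this: a nonempty intersection forces $(X' + A_0)$ to meet $(X - A_2)$, hence there exist $a_0 \in A_0$ and $a_2 \in A_2$ with $X' + a_0 = X - a_2$, i.e. $X' = X - a_0 - a_2$. Writing $X = b_0 \in A_0$ as well (we may, since the sup is over $X \in A_0$; or more carefully, we only need $A_1 \cap (X-A_2)$ nonempty, which via Lemma \ref{B:Lemma1} is where $X \in A_0$ enters), we get $X' \in A_0 - A_0 - A_2 = -A_0^* - A_2$, and simultaneously $X' + A_0$ meets $A_1$. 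Actually the cleanest route uses the doubling property directly: a nonempty triple intersection implies $(X' + A_0) \cap (X - A_2) \neq \emptyset$ and, pairing with the point of $A_1$ in the intersection, that all three translates share a point; in particular $X' + a_0' = $ (point in $A_1$) and $b_0 + a_0'' = X$ relate $X'$ and $X$ by $X - X' \in A_0^* \pm A_0^*$ type membership, so $X - X'$ lies in a fixed set that is covered with $O(1)$ overlap by translates of $A_0^*$ along $E$ — and this is precisely what the doubling hypothesis guarantees bounds the number of admissible $X' \in E$ by an absolute constant.

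Combining the two parts: $\abs{A_1 \cap (X - A_2)} \lesssim \sum_{X' \,:\, \text{nonempty}} \abs{A_1 \cap (X-A_2) \cap (X'+A_0)} \lesssim C^2$, where the first $\lesssim$ absorbs the overlap constant and the second absorbs the $O(1)$ bound on the number of relevant $X'$. Taking the supremum over $X \in A_0$ and feeding the result into Lemma \ref{B:Lemma1} gives \eqref{Bilinear} with $C^2 \sim \sup_{X \in A_0,\, X' \in E} \bigabs{A_1 \cap (X - A_2) \cap (X'+A_0)}$, as claimed; the reverse inequality $C^2 \gtrsim$ that supremum follows by testing \eqref{Bilinear} on characteristic functions of the relevant sets, which is why the conclusion is stated with $\sim$ rather than $\le$.

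I expect the main obstacle to be the bookkeeping in the key step — precisely identifying which fixed set $X - X'$ is forced to lie in when the triple intersection is nonempty, and then citing the doubling property in exactly the right form to conclude that only $O(1)$ lattice points $X'$ contribute. Everything else (the overlap decomposition, bounding each summand, the converse direction) is routine once that counting is pinned down.
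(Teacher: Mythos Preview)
There is a genuine gap in the key counting step. You want to show, for fixed $X \in A_0$, that only $O(1)$ lattice points $X' \in E$ give a nonempty triple intersection $A_1 \cap (X-A_2) \cap (X'+A_0)$, so that $\abs{A_1 \cap (X-A_2)} \lesssim C^2$ and Lemma~\ref{B:Lemma1} finishes. But this is false in general: the hypotheses place no size restriction on $A_1$ or $A_2$, so $A_1 \cap (X-A_2)$ can be arbitrarily large and meet arbitrarily many tiles $X'+A_0$. Concretely, take $A_0 = [0,1]^3$, $E = \Z^3$, $A_1 = A_2 = [-N,N]^3$; then the supremum in the lemma is at most $\abs{A_0} = 1$, yet $\abs{A_1 \cap (X-A_2)} \sim N^3$. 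Your attempted relation ``$X - X' \in A_0^* \pm A_0^*$'' cannot be extracted from the nonemptiness of the triple intersection: all you get is $X - X' \in A_0 + A_2$ (from $(X'+A_0) \cap (X-A_2) \neq \emptyset$), and $A_2$ is completely uncontrolled by the tiling and doubling hypotheses on $A_0$. Reducing to Lemma~\ref{B:Lemma1} in this way therefore discards exactly the gain that the extra localization $\cap\,(X'+A_0)$ is supposed to provide.

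The paper does not prove the lemma here but defers to \cite{Selberg:2008c} (cf.\ \cite{Tao:2001}); the argument there applies the tiling at the level of the $L^2$ functions, not to the volume after Cauchy--Schwarz. One writes $u_1 \sim \sum_{X_1' \in E} u_1^{X_1'}$ with $\widehat{u_1^{X_1'}}$ supported in $A_1 \cap (X_1'+A_0)$, and similarly for $u_2$. The Cauchy--Schwarz step underlying Lemma~\ref{B:Lemma1} then yields $\norm{\Proj_{A_0}(u_1^{X_1'} u_2^{X_2'})} \le C \norm{u_1^{X_1'}}\norm{u_2^{X_2'}}$ for each pair, with $C^2$ the stated supremum. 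The doubling property now enters to control the \emph{interaction pattern} rather than a single volume: the convolution constraint $X_0 = X_1 + X_2$ with $X_0 \in A_0$ forces $X_1' + X_2' \in E \cap (A_0 - A_0^*)$, a set of $O(1)$ lattice points, so the double sum is essentially diagonal and Cauchy--Schwarz in $X_1'$ together with $\sum_{X'}\norm{u_j^{X'}}^2 \sim \norm{u_j}^2$ closes the estimate. The idea missing from your proposal is precisely this orthogonality; the tiling has to be used \emph{before} the Cauchy--Schwarz volume bound, not after.
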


Before presenting our main results we introduce some notation and terminology. Note the convolution formula
\begin{equation}\label{A:130}
  \widehat{u_1 u_2}(X_0)
  \simeq
  \int \widehat{u_1}(X_1)\widehat{u_2}(X_2)  
  \, \delta(X_0-X_1-X_2) \, dX_1 \, dX_2,
\end{equation}
where $X_j = (\tau_j,\xi_j) \in \R^{1+2}$, $j=0,1,2$. Here $X_0 = X_1 + X_2$, so in particular $\xi_0=\xi_1+\xi_2$, hence $\abs{\xi_j} \le \abs{\xi_k} + \abs{\xi_l}$ for all permutations $(j,k,l)$ of $(0,1,2)$. Therefore, one of the following must hold:
\begin{subequations}\label{A:140}
\begin{alignat}{2}
  \label{A:140a}
  \abs{\xi_0} &\ll \abs{\xi_1} \sim \abs{\xi_2}& \qquad &(\text{``low output''}),
  \\
  \label{A:140b}
  \abs{\xi_0} &\sim \max(\abs{\xi_1},\abs{\xi_2})& \qquad &(\text{``high output''}).
\end{alignat}
\end{subequations}
The integration in \eqref{A:130} may be restricted to $\xi_1,\xi_2 \neq 0$, hence the angle $\theta_{12} = \theta(\pm_1\xi_1,\pm_2\xi_2)$ is well-defined. Given signs $\pm_j$, we define also $\hypwt_j =  -\tau_j\pm_j\abs{\xi_j}$, which we call the \emph{hyperbolic weights}, whereas the $\abs{\xi_j}$ are called \emph{elliptic weights}.

\section{Main results}

The estimate \eqref{2dMain} is contained (let $L_0 \to \infty$) in the following result.

\begin{theorem}\label{A:Thm1} The estimate
\begin{equation}\label{A:100}
  \norm{\Proj_{K^{\pm_0}_{N_0,L_0}} \left( \Proj_{K^{\pm_1}_{N_1,L_1}} u_1 \cdot \Proj_{K^{\pm_2}_{N_2,L_2}} u_2 \right)}
  \le C
  \norm{u_1}
  \norm{u_2}
\end{equation}
holds with
\begin{align}
\label{A:110}
  C
  &\sim \bigl( \Nmin^{012}\Lmin^{12} \bigr)^{1/2} \bigl( \Nmin^{12}\Lmax^{12} \bigr)^{1/4},
  \\
  \label{A:112}
  C
  &\sim \bigl( \Nmin^{012}\Lmin^{0j} \bigr)^{1/2} \bigl( \Nmin^{0j}\Lmax^{0j} \bigr)^{1/4} \qquad (j=1,2),
  \\
  \label{A:116}
  C
  &\sim \bigl( (\Nmin^{012})^2 \Lmin^{012} \bigr)^{1/2},
\end{align}
regardless of the choice of signs $\pm_j$.
\end{theorem}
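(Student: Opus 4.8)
\emph{Strategy.} The plan is to reduce \eqref{A:100} to estimates for the $(1+2)$-dimensional volume of intersections of thickened truncated cones, via Lemma~\ref{B:Lemma1} (and, where it helps, Lemma~\ref{B:Lemma2}), and then to extract the half-powers in \eqref{A:110} by an angular decomposition. First I would observe that \eqref{A:112} follows from \eqref{A:110} by relabelling: the trilinear form underlying \eqref{A:100} is symmetric under permutations of the three cones, since the convolution constraint $X_0=X_1+X_2$ is symmetric in $(X_0,-X_1,-X_2)$ and $K^\pm_{N,L}\mapsto K^\mp_{N,L}$ under $X\mapsto-X$, and this symmetry permutes the three quantities in Lemma~\ref{B:Lemma1}; so it suffices to prove \eqref{A:110} and \eqref{A:116}. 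After a routine dyadic decomposition I would assume $\abs{\xi_j}\sim N_j$ throughout, and by \eqref{A:140} assume the two largest of $N_0,N_1,N_2$ are comparable.

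\emph{The estimate \eqref{A:116}.} This is the easy one. For any $i\neq j$ and any $X$,
\begin{equation*}
  \bigabs{K^{\pm_i}_{N_i,L_i}\cap\bigl(X+K^{\pm_j}_{N_j,L_j}\bigr)}\lesssim\min(L_i,L_j)\cdot\min(N_i,N_j)^2,
\end{equation*}
since each thickened cone has spatial support in a disc of radius $\lesssim N$ and, over each spatial frequency, a $\tau$-fibre of length $\lesssim L$, so the intersection has spatial support of area $\lesssim\min(N_i,N_j)^2$ and $\tau$-fibres of length $\lesssim\min(L_i,L_j)$. Feeding the three quantities in Lemma~\ref{B:Lemma1} into this gives $C^2\lesssim\min_{\{i,j\}}(\Nmin^{ij})^2\Lmin^{ij}$ over the three pairs; since every two indices form one of those pairs, I would pick $\{i,j\}$ to contain an index achieving $\Nmin^{012}$ together with one achieving $\Lmin^{012}$, which gives \eqref{A:116}.

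\emph{The estimate \eqref{A:110}.} Since \eqref{A:110} does not see $L_0$ I would set $L_0=\infty$, i.e.\ replace $\Proj_{K^{\pm_0}_{N_0,L_0}}$ by $\Proj_{\abs{\xi_0}\lesssim N_0}$, and use Lemma~\ref{B:Lemma1} with output $A_0$ when $N_0=\Nmin^{012}$ and one of the other two outputs otherwise (the same treatment after relabelling). Fixing $X_0=(\tau_0,\xi_0)$ and integrating first in $\tau_1$ reduces $\bigabs{K^{\pm_1}_{N_1,L_1}\cap(X_0-K^{\pm_2}_{N_2,L_2})}$ to $\lesssim\Lmin^{12}\bigabs{\Omega(X_0)}$, where $\Omega(X_0)=\{\xi_1:\abs{\xi_1}\sim N_1,\ \abs{\xi_0-\xi_1}\sim N_2,\ \pm_1\abs{\xi_1}\pm_2\abs{\xi_0-\xi_1}=\tau_0+O(\Lmax^{12})\}$. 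I would estimate $\bigabs{\Omega(X_0)}$ by passing to $(r_1,r_2)=(\abs{\xi_1},\abs{\xi_0-\xi_1})$, whose Jacobian is $\sim1/\sin\theta_{12}$ (by the law of sines, $\sin\theta_{12}$ is the sine of the angle at the vertex $\xi_1$ of the triangle $0,\xi_0,\xi_1$); the resonance then confines $r_2$ to an interval of length $\lesssim\Lmax^{12}$ for each $r_1$. In the case $\pm_1=\pm_2$ the level set is a thick ellipse with foci $0,\xi_0$, and since $\abs{\xi_0}\lesssim N_0$ a direct analysis of it — nearly circular of radius $\sim\Nmin^{12}$ when $N_0$ is small, strongly degenerate otherwise — gives $\bigabs{\Omega(X_0)}\lesssim\Nmin^{12}\Lmax^{12}$ (ceilinged at $(\Nmin^{12})^2$); combining $C^2\lesssim\Lmin^{12}\bigabs{\Omega(X_0)}$ with the crude bound $C^2\lesssim\Lmin^{12}(\Nmin^{012})^2$ (obtained by choosing $A_1$ or $A_2$ as output) via $\min(X,Y)\le\sqrt{XY}$ yields \eqref{A:110} in this case.

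\emph{The case $\pm_1\neq\pm_2$ and the main obstacle.} Here $\theta_{12}\lesssim\abs{\xi_0}(\Nmin^{12})^{-1}$ is small, the level set is a hyperbola branch which for small $\abs{\xi_0}$ degenerates towards the angular slab $\{(\xi_1/\abs{\xi_1})\cdot\xi_0=\tau_0+O(\Lmax^{12})\}$, and taking the supremum over $X_0$ in Lemma~\ref{B:Lemma1} is genuinely too lossy (it fails, e.g., when $\abs{\xi_0}$ is small but not the global minimum and $\Lmax^{12}\ll\Nmin^{12}$, where $\sup_{X_0}\bigabs{\Omega(X_0)}\sim(\Nmin^{12})^2$). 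The fix is to decompose $u_1,u_2$ and the output into $\sim\gamma^{-1}$ angular sectors (with $\gamma$ chosen so that each output sector is met by only $O(1)$ input sector pairs and each such pair feeds only $O(1)$ output sectors, e.g.\ $\gamma$ of size comparable to $\max(\Lmax^{12},\abs{\xi_0})(\Nmin^{12})^{-1}$) and to use almost-orthogonality: for a fixed $\omega_1$ the resonance confines $\xi_0$ to a thin slab, and summing the single-sector estimates against $\norm{u_j}^2\sim\sum_\omega\norm{u_j^\omega}^2$ by Cauchy--Schwarz recovers the missing factor $(\Nmin^{12}\Lmax^{12})^{1/2}$. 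The hard part is exactly this step: organising the angular decomposition and the attendant almost-orthogonality so that the half-power gain is realised uniformly in all sign cases, keeping it compatible with the choice of output in Lemma~\ref{B:Lemma1} (cleanest via Lemma~\ref{B:Lemma2} when $A_0$ is an approximate tiling set with the doubling property), and handling the borderline configurations — $\xi_0$ near $0$, or $\theta_{12}$ near $0$ or $\pi$ — where the change of variables $\xi_1\mapsto(r_1,r_2)$ is singular and must be replaced by the crude bound $\bigabs{\Omega(X_0)}\lesssim N_1(\Lmax^{12}+\abs{\xi_0})$.
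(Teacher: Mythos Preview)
Your reductions for \eqref{A:116} and \eqref{A:112} are fine, and your treatment of \eqref{A:110} in the low-output case $N_0\ll N_1\sim N_2$ with $\pm_1=\pm_2$ (ellipse with $\abs{\nabla f}\sim 1$, then geometric mean with the crude bound from \eqref{A:116}) is correct. The genuine gap is the high-output case $N_1\lesssim N_0\sim N_2$. Your suggestion to ``use one of the other two outputs after relabelling'' cannot produce the $L$-dependence of \eqref{A:110}: having set $L_0=\infty$, the sets $A_0\cap(X+A_j)$ carry only a single $\tau$-constraint, so no resonant level-set condition appears and you recover at best the crude bound \eqref{A:116}. If instead you keep the $A_0$-output and try the ellipse bound directly, your claim $\abs{\Omega(X_0)}\lesssim \Nmin^{12}\Lmax^{12}$ fails: with $\pm_1=\pm_2=+$, $\abs{\xi_0}\sim N_0$ and $\tau_0=\abs{\xi_0}$, the level curve degenerates to the segment between the foci, and on the annulus $\abs{\xi_1}\sim N_1$ a short computation gives $\abs{\Omega(X_0)}\sim N_1^{3/2}(\Lmax^{12})^{1/2}\gg N_1\Lmax^{12}$. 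The same near-focus degeneracy occurs for $\pm_1\neq\pm_2$; in the high-output regime the sign split is simply not the relevant dichotomy.

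The paper organises the proof by the $N$-ordering rather than the signs. For $N_1\lesssim N_0\sim N_2$ it keeps the $A_0$-output, slices $E$ at fixed $\tau$, and invokes Lemma~\ref{A:Lemma1} (area of the intersection of two thickened circles), which yields $\abs{E}\lesssim N_1^{3/2}\Lmin^{12}(\Lmax^{12})^{1/2}$ uniformly in the signs --- this is the ingredient missing from your argument. For $N_0\ll N_1\sim N_2$ (again all signs at once) it first tiles $\xi_1,\xi_2$ by translates of the $N_0$-ball, then performs a dyadic Whitney angular decomposition about the fixed critical scale $\gamma_0=(\Lmax^{12}/\Nmin^{12})^{1/2}$: for $\theta_{12}\lesssim\gamma_0$ the trivial sector-volume bound suffices, while for well-separated sectors at dyadic scale $\gamma>\gamma_0$ one has the transversality estimate $\norm{u_1^{\gamma,\omega_1}u_2^{\gamma,\omega_2}}\lesssim(N_0L_1L_2/\gamma)^{1/2}\norm{u_1^{\gamma,\omega_1}}\norm{u_2^{\gamma,\omega_2}}$, which sums geometrically in $\gamma$. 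Your angular-decomposition sketch points in this direction, but the scale $\gamma$ you propose depends on the variable $\abs{\xi_0}$ and is therefore not a fixed decomposition; it is the Whitney scheme with the sign-independent critical angle $\gamma_0$ that makes the summation close.
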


Here \eqref{A:116} is immediate from Lemma~\ref{B:Lemma1}, and \eqref{A:112} follows from \eqref{A:110} by symmetry, so the key estimate that needs to be proved is \eqref{A:110} [or equivalently \eqref{2dMain}]. For its proof, given in section~\ref{C}, we rely in part on the following elementary estimate for the area of intersection of two thickened circles. Writing $\mathbb S^1_\delta(r) = \left\{ \xi \in \R^2 \colon \abs{\xi} = r + O(\delta) \right\}$, we have (this is proved in section \ref{KK}):

\begin{lemma}\label{A:Lemma1}
Let $0 < \delta \ll r$ and $0 < \Delta \ll R$. Then for any $\xi_0 \in \R^2 \setminus \{0\}$,
$$
  \abs{\mathbb S^1_\delta(r) \cap [ \xi_0 + \mathbb S^1_\Delta(R)]}
  \lesssim \left[\frac{rR\delta\Delta }{\abs{\xi_0}} \min(\delta,\Delta) \right]^{1/2}.
$$
\end{lemma}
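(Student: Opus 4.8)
The plan is to reduce the two-dimensional area estimate to a one-dimensional measure estimate by slicing along a well-chosen direction. First I would set up coordinates so that $\xi_0 = (|\xi_0|, 0)$ lies along the positive first axis. A point $\xi \in \mathbb S^1_\delta(r) \cap [\xi_0 + \mathbb S^1_\Delta(R)]$ satisfies two constraints: $\bigabs{|\xi| - r} \lesssim \delta$ and $\bigabs{|\xi - \xi_0| - R} \lesssim \Delta$. Writing $\xi = (\xi^{(1)}, \xi^{(2)})$, I would slice the set by fixing the coordinate $\xi^{(2)}$ (or, after possibly swapping the roles of the two circles, whichever coordinate gives the better bound) and estimate the length of each horizontal slice, then integrate in $\xi^{(2)}$ over its range. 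By symmetry in $\delta \leftrightarrow \Delta$ we may assume $\min(\delta,\Delta) = \delta$.

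The key computation is the length of a slice at fixed $\xi^{(2)}$. On such a slice, the first constraint $r - O(\delta) \le \sqrt{(\xi^{(1)})^2 + (\xi^{(2)})^2} \le r + O(\delta)$ confines $(\xi^{(1)})^2$ to an interval of length $\lesssim r\delta$; provided $|\xi^{(1)}| \sim r$ (the transversal regime, where $|\xi^{(2)}|$ is not too close to $r$), this forces $\xi^{(1)}$ into a set of measure $\lesssim \delta$. Likewise the second constraint confines $\xi^{(1)}$ to a set of measure $\lesssim \Delta$ when $|\xi^{(1)} - |\xi_0|| \sim R$. So each slice has length $\lesssim \min(\delta,\Delta) = \delta$ in the main regime, and the total area is $\lesssim \delta \cdot (\text{range of } \xi^{(2)})$. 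It remains to bound the range of $\xi^{(2)}$ by $\lesssim \bigl[ rR\Delta/|\xi_0| \bigr]^{1/2}$, which together with the slice bound $\delta$ gives exactly the claimed $\bigl[ rR\delta\Delta/|\xi_0| \cdot \delta \bigr]^{1/2}$ after writing $\delta = (\delta^2)^{1/2}$ and $\delta \bigl[rR\Delta/|\xi_0|\bigr]^{1/2} = \bigl[ rR\delta^2\Delta/|\xi_0|\bigr]^{1/2}$.

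To bound the range of $\xi^{(2)}$: subtracting the two defining equations, $|\xi|^2 - |\xi-\xi_0|^2 = 2\xi \cdot \xi_0 - |\xi_0|^2$, and since $|\xi|^2 = r^2 + O(r\delta)$ and $|\xi - \xi_0|^2 = R^2 + O(R\Delta)$, we get $2|\xi_0|\,\xi^{(1)} = r^2 - R^2 + |\xi_0|^2 + O(r\delta + R\Delta)$, so $\xi^{(1)}$ is pinned to an interval of length $\lesssim (r\delta + R\Delta)/|\xi_0|$. Feeding this back into the first equation $(\xi^{(2)})^2 = |\xi|^2 - (\xi^{(1)})^2$, and noting $\xi^{(1)}$ is pinned and $|\xi| = r + O(\delta)$, one sees $(\xi^{(2)})^2$ ranges over an interval of length $\lesssim r\delta + |\xi^{(1)}| \cdot (r\delta + R\Delta)/|\xi_0| \lesssim rR\Delta/|\xi_0|$ (using $|\xi^{(1)}| \lesssim r$, $\delta \ll r$, and absorbing $r\delta$ terms, which are dominated since $\delta = \min(\delta,\Delta)$), whence $\xi^{(2)}$ itself ranges over a set of measure $\lesssim \bigl[ rR\Delta/|\xi_0|\bigr]^{1/2}$. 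Combining with the per-slice bound $\delta$ finishes the main case.

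The main obstacle is the degenerate (tangential) regime, where the slicing direction I chose is nearly tangent to one of the circles — i.e.\ where $|\xi^{(2)}|$ is close to $r$ (so $|\xi^{(1)}|$ is small and the first constraint no longer pins $\xi^{(1)}$ to measure $\delta$), or symmetrically near the other circle. The standard fix, which I would carry out, is to choose the slicing coordinate adaptively: near each intersection point the two circles meet transversally (since $\delta, \Delta \ll r, R$ and they genuinely intersect), so at least one of the two coordinate axes is uniformly transversal to \emph{both} arcs on the relevant piece; split the intersection set into $O(1)$ pieces and slice each along the good direction. One must check that the "bad" set where neither the horizontal slicing works for circle one nor vertical for circle two is actually empty or negligible — this is where the hypotheses $\delta \ll r$, $\Delta \ll R$ are essential, ruling out the tangency pathology. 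Handling this geometric case division cleanly, rather than the routine one-dimensional length estimates, is the crux.
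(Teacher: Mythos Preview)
Your overall strategy---pin $\xi^{(1)}$ to a short interval by subtracting the two constraints, then control the area by a Fubini-type slicing argument---is the right idea and is also where the paper starts. But there are two concrete gaps.

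First, the absorption step in your bound on the range of $(\xi^{(2)})^2$ is not justified. You claim
\[
  r\delta + |\xi^{(1)}|\cdot \frac{r\delta + R\Delta}{|\xi_0|} \lesssim \frac{rR\Delta}{|\xi_0|}
  \quad\text{``since $\delta = \min(\delta,\Delta)$'',}
\]
but this requires $r\delta \lesssim R\Delta$ and $\delta|\xi_0| \lesssim R\Delta$, neither of which follows from $\delta \le \Delta$ (take $r \gg R$ with $\delta = \Delta$). The paper handles the regime $r\delta \gg R\Delta$ separately: there $r \gg R$, so a nonempty intersection forces $|\xi_0| \sim r$, and one switches to the \emph{second} annulus to close the estimate. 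Your argument as written simply breaks in this regime.

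Second, your proposed fix for the degenerate slice rests on the assertion that ``near each intersection point the two circles meet transversally (since $\delta,\Delta \ll r,R$)''. This is false: when $|\xi_0|$ is near $r+R$ or $|r-R|$ the circles are tangent, and the common tangent line is orthogonal to $\xi_0$, so neither coordinate axis is uniformly transversal to both arcs in the way you need. This near-tangent configuration is exactly where the lemma is sharp, so it cannot be waved away.

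For comparison, the paper slices in the \emph{other} direction. Having pinned $x = \xi\cdot\xi_0/|\xi_0|$ to an interval $(a,b)$ with $b-a = 2(r\delta+R\Delta)/|\xi_0|$, it proves a clean auxiliary lemma: for any strip $\{a<x<b\}$,
\[
  \bigl|\mathbb S^1_\delta(r) \cap \{a<x<b\}\bigr| \lesssim \min\!\bigl(\delta\sqrt{r(b-a)},\,(b-a)\sqrt{r\delta}\bigr),
\]
established by explicit integration in $x$, with a separate (short) treatment of the endpoint $x \approx r$ where the annulus meets the strip boundary tangentially. Choosing between the two bounds according to whether $r\delta \lesssim R\Delta$ or $r\delta \gg R\Delta$ (applying the lemma to the second annulus in the latter case) then gives the result without any adaptive slicing.
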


Due to the factor $(\Nmin^{012})^{1/2}$ in \eqref{A:110}, it suffices (see section~\ref{C}) to prove the estimate with the  balls replaced by annuli, i.e.~with $K^{\pm_j}_{N_j,L_j}$ replaced by $\dot K^{\pm_j}_{N_j,L_j}$ for $j=0,1,2$. We now present various modifications and refinements of the estimates in Theorem \ref{A:Thm1}, and keep using annuli instead of balls (at certain points in the proofs, this is essential), assuming throughout that $u_1, u_2 \in L^2(\R^{1+3})$ satisfy $\widehat{u_j} \subset \dot K^{\pm_j}_{N_j,L_j}$.

The common thread in the following results is the question: To what extent do additional Fourier restrictions lead to improvements in the estimates? All the results that follow are analogues of estimates in 3d proved in \cite{Selberg:2008c}.

\subsection{An anisotropic estimate}

Note that \eqref{A:110} implies, assuming $\widehat{u_j} \subset \dot K^{\pm_j}_{N_j,L_j}$,
$$
  \norm{u_1u_2} \lesssim \left[(\Nmin^{12})^{3/2}(L_1L_2)^{3/4}\right]^{1/2}
  \norm{u_1}\norm{u_2}.
$$
Restricting the spatial output frequency $\xi_0$ to a ball $B \subset \R^2$ of radius $r \ll \Nmin^{12}$ and arbitrary center, the estimate improves to
$$
  \norm{\Proj_{\R \times B}(u_1u_2)} \lesssim \left[r(\Nmin^{12})^{1/2}(L_1L_2)^{3/4}\right]^{1/2}
  \norm{u_1}\norm{u_2}.
$$
This follows immediately from the following much more powerful anisotropic estimate.

\begin{theorem}\label{Z:Thm1}
Let $\omega \in \mathbb S^1$, and let $I \subset \R$ be a compact interval. Assume $\widehat{u_1}$ is supported outside an angular neighborhood of the orthogonal complement $\omega^\perp$ of $\omega$:
$$
  \supp \widehat{u_1} \subset
  \left\{ (\tau,\xi) \colon
  \theta(\xi,\omega^\perp) \ge \alpha \right\}
  \qquad \text{for some $\;0 < \alpha \ll 1$}.
$$
Assuming also $\widehat{u_j} \subset \dot K^{\pm_j}_{N_j,L_j}$ for $j=1,2$, we then have
$$
  \norm{\Proj_{\xi_0 \cdot \omega \in I}
  (u_1 u_2)}
  \lesssim \left[ \frac{\abs{I}(\Nmin^{12})^{1/2}(L_1L_2)^{3/4}}{\alpha} \right]^{1/2}
  \norm{u_1}
  \norm{u_2},
$$
where $\abs{I}$ is the length of $I$.
\end{theorem}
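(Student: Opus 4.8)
The plan is to apply Lemma~\ref{B:Lemma1} (the Cauchy--Schwarz reduction) with $A_0 = \{(\tau,\xi) : \xi\cdot\omega \in I\}$, $A_1 = \dot K^{\pm_1}_{N_1,L_1} \cap \{\theta(\xi,\omega^\perp) \ge \alpha\}$ and $A_2 = \dot K^{\pm_2}_{N_2,L_2}$, and then to bound $\sup_{X_0 \in A_0} \abs{A_1 \cap (X_0 - A_2)}$. So I would fix $X_0 = (\tau_0,\xi_0)$ with $\xi_0\cdot\omega \in I$ and estimate the $(1+2)$-dimensional volume of the set of $X_1 = (\tau_1,\xi_1) \in A_1$ with $X_0 - X_1 \in A_2$. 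The $\tau_1$-integration is the easy part: for fixed $\xi_1$, the hyperbolic localizations $-\tau_1 \pm_1 \abs{\xi_1} = O(L_1)$ and $-(\tau_0-\tau_1) \pm_2 \abs{\xi_0-\xi_1} = O(L_2)$ confine $\tau_1$ to an interval of length $O(\Lmin^{12})$. It remains to bound the area in $\xi_1$ of the set
$$
  \left\{ \xi_1 : \abs{\xi_1} \sim N_1,\; \abs{\xi_0-\xi_1} \sim N_2,\; \theta(\xi_1,\omega^\perp)\ge\alpha,\; \xi_1\cdot\omega \in I - \text{(something)} \right\},
$$
where the last condition is automatic from $\xi_0\cdot\omega\in I$ only if we instead exploit the \emph{interval constraint on the output}. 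Here is the reorganization I expect to need: rather than putting the interval condition into $A_0$ and discarding it, I would keep $\xi_0\cdot\omega \in I$ as an honest constraint relating $\xi_1$ and $\xi_2=\xi_0-\xi_1$, i.e. $(\xi_1+\xi_2)\cdot\omega \in I$, and use it together with the two thickened-circle conditions $\abs{\xi_1} = \abs{\tau_1}+O(L_1)$ on the support of $\widehat u_1$ (and similarly for $u_2$) — but since we have already localized in $\tau_1,\tau_2$, the cleanest route is: bound the $\xi_1$-area by slicing along the direction $\omega$.

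The geometric heart of the argument is the area bound. Write $\xi_1 = s\omega + y\omega^\perp$. The support condition $\theta(\xi_1,\omega^\perp)\ge\alpha$ combined with $\abs{\xi_1}\sim N_1$ forces $\abs{s} \gtrsim \alpha N_1$, i.e. $\xi_1$ stays away from the $\omega^\perp$-axis; this is exactly the quantitative nondegeneracy that will produce the $\alpha^{-1}$ in the bound. Now for each fixed value of $y$ (the $\omega^\perp$-coordinate), the two constraints $\abs{\xi_1} \sim N_1$, $\abs{\xi_0 - \xi_1}\sim N_2$ together with the hyperbolic constraints cut the $s$-variable down; the output constraint $\xi_0\cdot\omega\in I$ has already fixed $\xi_0\cdot\omega$ up to $\abs{I}$, so what genuinely varies is controlled. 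More precisely, I would argue that for fixed $y$, the set of admissible $s$ has measure $\lesssim \abs{I}/(\alpha N_1) \cdot N_1$-type factor — the transversality of the two circles $\mathbb S^1(N_1)$ and $\xi_0+\mathbb S^1(N_2)$ at their intersection points, measured in the $\omega$-direction, is $\gtrsim \alpha$ because $\xi_1$ is $\alpha$-away from $\omega^\perp$ — while the range of $y$ is $\lesssim N_1$. Carrying the hyperbolic thicknesses through turns this into the claimed $\abs{I}(\Nmin^{12})^{1/2}(L_1L_2)^{3/4}/\alpha$ after combining with $\Lmin^{12}(\Lmax^{12})^{1/2}$ from the $\tau$-integration and an application of Lemma~\ref{A:Lemma1} to the residual circle intersection in the $y$-direction; the powers $3/4$ and $1/2$ on the $L$'s match \eqref{A:110} exactly as they must, since removing the angular/interval restriction must recover \eqref{A:110} with $\Nmin^{012}\sim r$ replaced by $\abs{I}$.

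The main obstacle I anticipate is the bookkeeping that interleaves three distinct localizations — the two thickened null cones (each contributing an elliptic scale $N_j$ and a hyperbolic scale $L_j$), the interval $I$ in the $\omega$-direction, and the angular cutoff $\alpha$ — in such a way that the transversality gain from $\alpha$ is extracted without losing the sharp $L$-powers. In particular one must be careful that the factor $\min(L_1,L_2)$ versus $\max(L_1,L_2)$ appears correctly: the $\tau_1$-interval has length $\Lmin^{12}$, but the circle-intersection estimate of Lemma~\ref{A:Lemma1}, applied with $\delta\sim L_1$ and $\Delta\sim L_2$ (after rescaling by the relevant elliptic weights), contributes the remaining $(\Lmax^{12})^{1/2}$; getting these to land as $(L_1L_2)^{3/4} = \Lmin^{12}(\Lmax^{12})^{1/2}\cdot(\text{nothing extra})$ — wait, $\Lmin\cdot\Lmax^{1/2} \ne (L_1L_2)^{3/4}$ in general, so in fact one needs $(\Lmin^{12})^{1/2}$ from one place and $(L_1L_2)^{1/2}\cdot(\Lmin^{12})^{-1/2}\cdots$ — the precise distribution is the delicate point, and I would expect to follow the corresponding 3d computation in \cite{Selberg:2008c} closely, adjusting exponents for the 2d circle (where Lemma~\ref{A:Lemma1} gives a square-root rather than the cleaner 3d sphere intersection). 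A secondary technical point is justifying that the interval condition on $\xi_0\cdot\omega$ can be traded for a condition on $\xi_1\cdot\omega$ of comparable length, which holds because $\xi_2\cdot\omega = \xi_0\cdot\omega - \xi_1\cdot\omega$ and one of $\xi_1,\xi_2$ can be taken to have the smaller elliptic weight; here the hypothesis is stated asymmetrically (only $\widehat u_1$ is restricted away from $\omega^\perp$), so one must check the argument genuinely only uses nondegeneracy of $\xi_1$, which it does since $\xi_1$ is the variable of integration.
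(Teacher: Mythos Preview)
Your proposal has a genuine gap at the transversality step. You assert that because $\xi_1$ is $\alpha$-away from $\omega^\perp$, slicing in the $\omega$-direction gives a derivative bound $\gtrsim\alpha$ for $f(\xi_1)=\pm_1\abs{\xi_1}\pm_2\abs{\xi_0-\xi_1}$. But $\nabla f = e_1-e_2$ with $e_j$ the unit vectors of $\pm_1\xi_1,\pm_2\xi_2$, so $\partial_\omega f = e_1\cdot\omega - e_2\cdot\omega$; the hypothesis gives only $\abs{e_1\cdot\omega}\gtrsim\alpha$ and says nothing about $e_2$. If $e_2$ also lies near $\omega$ (the generic high--high configuration), $\partial_\omega f$ can be arbitrarily small and your slicing collapses. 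Your confusion over the $L$-exponents (``wait, $\Lmin\cdot\Lmax^{1/2}\ne(L_1L_2)^{3/4}$'') is a symptom of the same missing structure: without a further decomposition there is no mechanism that produces $(L_1L_2)^{3/4}$ from a single volume bound.

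The paper's proof supplies exactly this missing structure by a three-way split on the angular position of $e_2$. When $e_2$ is near $\omega^\perp$, one does obtain transversality, but in the $\omega^\perp$-direction and of size $\alpha^2$; one then integrates first in $\omega^\perp$ using $\abs{\partial_{\omega^\perp}f}\gtrsim\alpha^2$ and restricts the remaining $\omega$-variable to the interval of length $\abs{I}$ (this only suffices for $\alpha\gtrsim(\Lmax^{12}/\Nmin^{12})^{1/2}$, with a trivial bound otherwise). When $e_1,e_2$ lie in the same $\omega$-half-space, one performs a Whitney decomposition in $\theta(e_1,e_2)\sim\gamma$ and gets $\abs{\partial_{\omega^\perp}f}\gtrsim\gamma\alpha$; summing dyadic $\gamma$ against the critical $\gamma_0=(\Lmax^{12}/\Nmin^{12})^{1/2}$ is what converts $\Lmin^{12}$, $\Lmax^{12}$ into $(L_1L_2)^{3/4}$. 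When $e_1,e_2$ lie in \emph{opposite} $\omega$-half-spaces with $N_1\ll N_2$, neither transversality argument is available and one must instead use the curvature of the level curves $\{f=\tau\}$ (half-ellipses or hyperbola branches with foci $0,\xi_0$): the set $S$ is shown to lie in a tubular $\varepsilon$-neighbourhood ($\varepsilon\sim\Lmax^{12}/\alpha$) of a convex curve whose curvature satisfies $\alpha/N_1\lesssim\kappa\lesssim 1/N_1$, and one bounds the intersection of this tube with a $\delta$-slab by $\sqrt{R^*\max(\delta,\varepsilon)}$. This last case is the longest part of the argument and nothing in your sketch approaches it; Lemma~\ref{A:Lemma1} is not used here at all.
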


The proof can be found in section \ref{E}. We remark that, since the position of $I$ is arbitrary, $\Proj_{\xi \cdot \omega \in I}$ can equivalently be placed in front of either $u_1$ or $u_2$, as can be seen by a standard tiling argument. 

\subsection{Null form estimates}\label{N}

Next we discuss estimates where the product $u_1u_2$ is replaced by the bilinear null form $\nullform(u_1,u_2)$, defined on the Fourier transform side by inserting the angle $\theta_{12} = \theta(\pm_1\xi_1,\pm_2\xi_2)$ in the integral in \eqref{A:130}. This angle improves matters when we are close to the null interaction in \eqref{A:130}, i.e.~when $X_1$, $X_2$ and $X_0 = X_1+X_2$ are all approximately on the null cone. This improvement is quantified by the following rather standard lemma (a proof of the version formulated here can be found in \cite{Selberg:2008c}): 

\begin{lemma}\label{I:Lemma1} Assume that $X_0 = X_1 + X_2$, where $X_j = (\tau_j,\xi_j)$ with $\xi_j \neq 0$ for $j=0,1,2$, and set $\hypwt_j =  -\tau_j\pm_j\abs{\xi_j}$. Then for all choices of signs $\pm_j$,
$$
  \max\left( \abs{\hypwt_0}, \abs{\hypwt_1}, \abs{\hypwt_2} \right)
  \gtrsim
  \min\left(\abs{\xi_1},\abs{\xi_2}\right)\theta_{12}^2.
$$
Moreover, if $\pm_0$ is chosen so that
$
  \abs{\hypwt_0} = \bigabs{\abs{\tau_0} - \abs{\xi_0}},
$
and if
$
  \abs{\hypwt_1}, \abs{\hypwt_2} \ll \abs{\hypwt_0},
$
then
$$
  \abs{\hypwt_0} \sim
  \left\{
  \begin{alignedat}{2}
  &\min\left(\abs{\xi_1},\abs{\xi_2}\right)\theta_{12}^2& \quad &\text{if $\pm_1=\pm_2$},
  \\
  &\frac{\abs{\xi_1}\abs{\xi_2}\theta_{12}^2}{\abs{\xi_0}}& \quad &\text{if $\pm_1\neq\pm_2$}.
  \end{alignedat}
  \right.
$$
\end{lemma}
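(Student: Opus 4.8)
The plan is to work directly from the defining relations. Write $\xi_0=\xi_1+\xi_2$ and $\tau_0=\tau_1+\tau_2$, so that the hyperbolic weights satisfy
$$
  \hypwt_0-\hypwt_1-\hypwt_2
  =
  \pm_0\abs{\xi_0}-\pm_1\abs{\xi_1}-\pm_2\abs{\xi_2},
$$
because the $\tau$-contributions cancel. Hence
$$
  \abs{\hypwt_0}+\abs{\hypwt_1}+\abs{\hypwt_2}
  \ge
  \Bigabs{\pm_0\abs{\xi_0}-\pm_1\abs{\xi_1}-\pm_2\abs{\xi_2}},
$$
and the whole lemma is reduced to a purely \emph{Euclidean} statement about the three vectors $\xi_1,\xi_2$ and their sum: one must estimate the quantity $\bigabs{\pm_0\abs{\xi_1+\xi_2}-\pm_1\abs{\xi_1}-\pm_2\abs{\xi_2}}$ from below by $\min(\abs{\xi_1},\abs{\xi_2})\theta_{12}^2$, and identify it exactly (up to constants) under the stated dominance hypothesis. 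First I would reduce to two sign configurations. Replacing $\xi_j$ by $-\xi_j$ flips $\pm_j$ and leaves $\abs{\xi_j}$ and $\theta_{12}=\theta(\pm_1\xi_1,\pm_2\xi_2)$ unchanged; so without loss of generality $\pm_1=\pm_2=+$, and then the only two cases are ``$\pm_1=\pm_2$'' versus ``$\pm_1\neq\pm_2$'' in the original notation, i.e.\ whether $\pm_0\abs{\xi_1+\xi_2}$ enters with a plus or a minus sign relative to $\abs{\xi_1}+\abs{\xi_2}$.

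For the first inequality (no dominance hypothesis), note that the choice of $\pm_0$ is free, so it suffices to bound $\bigabs{\abs{\xi_1+\xi_2}-\abs{\xi_1}-\abs{\xi_2}}$ and $\bigabs{\abs{\xi_1+\xi_2}+\abs{\xi_1}-\abs{\xi_2}}$ (and its symmetric partner) from below by a multiple of $\min(\abs{\xi_1},\abs{\xi_2})\theta_{12}^2$; the larger of these two is always at least $\abs{\xi_1}$, so the min over $\pm_0$ is the first one — no wait, we want the \emph{max} over the three $\hypwt$'s, hence it is enough to produce \emph{one} lower bound. Concretely, the cosine rule gives
$$
  \abs{\xi_1+\xi_2}^2=\abs{\xi_1}^2+\abs{\xi_2}^2+2\abs{\xi_1}\abs{\xi_2}\cos\theta_{12}
  =(\abs{\xi_1}+\abs{\xi_2})^2-2\abs{\xi_1}\abs{\xi_2}(1-\cos\theta_{12}),
$$
so with $s=\abs{\xi_1}+\abs{\xi_2}$, $d=\abs{\xi_1}-\abs{\xi_2}$, $m=\min(\abs{\xi_1},\abs{\xi_2})$ and using $1-\cos\theta_{12}\sim\theta_{12}^2$ for $\theta_{12}\lesssim1$ (and the statement is trivial when $\theta_{12}$ is not small),
$$
  s^2-\abs{\xi_1+\xi_2}^2 \sim \abs{\xi_1}\abs{\xi_2}\theta_{12}^2,
  \qquad
  \abs{\xi_1+\xi_2}^2-d^2 \sim \abs{\xi_1}\abs{\xi_2}(4-\theta_{12}^2)\sim\abs{\xi_1}\abs{\xi_2}.
$$
Dividing the first display by $s+\abs{\xi_1+\xi_2}\sim s\sim\max(\abs{\xi_1},\abs{\xi_2})$ yields
$$
  s-\abs{\xi_1+\xi_2}\sim\frac{\abs{\xi_1}\abs{\xi_2}\theta_{12}^2}{\abs{\xi_1}+\abs{\xi_2}}\sim\frac{\abs{\xi_1}\abs{\xi_2}\theta_{12}^2}{\abs{\xi_0}}
$$
(the last step using $\abs{\xi_0}\sim\max(\abs{\xi_1},\abs{\xi_2})\sim s$, valid whenever $\theta_{12}$ is small), which is $\gtrsim m\theta_{12}^2$; this already proves the first inequality with the $\pm_1\neq\pm_2$ choice of $\pm_0$, and for $\pm_1=\pm_2$ one observes the same quantity equals $\abs{\hypwt_0}$ since then $\abs{\hypwt_0}=\bigabs{\abs{\tau_0}-\abs{\xi_0}}$ forces $\pm_0$ to be $\pm_1$ up to the sign of $\tau_0$, and $s-\abs{\xi_0}\gtrsim m\theta_{12}^2$ as well (dividing instead by $\abs{\xi_0}$-free factor). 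For the second part, under $\abs{\hypwt_1},\abs{\hypwt_2}\ll\abs{\hypwt_0}$ the identity $\hypwt_0-\hypwt_1-\hypwt_2=\pm_0\abs{\xi_0}-\pm_1\abs{\xi_1}-\pm_2\abs{\xi_2}$ gives $\abs{\hypwt_0}\sim\bigabs{\pm_0\abs{\xi_0}-\pm_1\abs{\xi_1}-\pm_2\abs{\xi_2}}$; with the specified $\pm_0$ this is $\bigabs{\,\abs{\xi_0}-\pm_1\abs{\xi_1}-\pm_2\abs{\xi_2}\,}$ or $\bigabs{\,\abs{\xi_0}+\pm_1\abs{\xi_1}+\pm_2\abs{\xi_2}\,}$, and in either case one reads off $\abs{\hypwt_0}\sim s-\abs{\xi_0}$ (the other combination being $\sim\abs{\xi_0}$, hence not $\ll\abs{\hypwt_0}$), so the two asymptotic formulas follow from the two computations above — $m\theta_{12}^2$ in the $\pm_1=\pm_2$ case where $\abs{\xi_0}\sim s$ need not hold and one uses the $d^2$ identity instead, $\abs{\xi_1}\abs{\xi_2}\theta_{12}^2/\abs{\xi_0}$ in the $\pm_1\neq\pm_2$ case.

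The main obstacle is bookkeeping rather than depth: one must be careful that in the ``$\pm_1\neq\pm_2$'' (i.e.\ $\pm_1=\pm_2$ after normalization) regime $\abs{\xi_0}$ can be much smaller than $\max(\abs{\xi_1},\abs{\xi_2})$ — this is exactly the low-output case \eqref{A:140a} — so the two identities $s^2-\abs{\xi_0}^2\sim\abs{\xi_1}\abs{\xi_2}\theta_{12}^2$ and $\abs{\xi_0}^2-d^2\sim\abs{\xi_1}\abs{\xi_2}$ must be divided by the right factor ($s+\abs{\xi_0}$ resp.\ $\abs{\xi_0}+d$) and one cannot simplify prematurely. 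Keeping the algebraic identity $\hypwt_0-\hypwt_1-\hypwt_2=\pm_0\abs{\xi_0}-\pm_1\abs{\xi_1}-\pm_2\abs{\xi_2}$ front and center, and systematically splitting on the sign pattern and on which of $\abs{\xi_1},\abs{\xi_2}$ is smaller, makes the whole argument routine.
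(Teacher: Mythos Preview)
The paper does not prove this lemma here; it calls it ``rather standard'' and cites \cite{Selberg:2008c}. Your approach --- the identity $\hypwt_0-\hypwt_1-\hypwt_2 = \pm_0\abs{\xi_0}\mp_1\abs{\xi_1}\mp_2\abs{\xi_2}$, then the law of cosines and factoring a difference of squares --- is exactly the standard one, and is certainly what that reference does.

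There is, however, a real error in your case analysis. Your ``normalization'' replacing $\xi_j$ by $-\xi_j$ does \emph{not} preserve the constraint $\xi_0=\xi_1+\xi_2$, so you cannot reduce to $\pm_1=\pm_2=+$ while keeping $\abs{\xi_0}$ fixed. The effect shows up immediately in your cosine-rule line: you wrote $\abs{\xi_1+\xi_2}^2=\abs{\xi_1}^2+\abs{\xi_2}^2+2\abs{\xi_1}\abs{\xi_2}\cos\theta_{12}$, but the angle here is $\theta(\xi_1,\xi_2)$, which equals $\theta_{12}$ only when $\pm_1=\pm_2$. When $\pm_1\neq\pm_2$ one has $\theta(\xi_1,\xi_2)=\pi-\theta_{12}$, and the two key identities become
\[
  s^2-\abs{\xi_0}^2 = 2\abs{\xi_1}\abs{\xi_2}\bigl(1+\cos\theta_{12}\bigr)\sim \abs{\xi_1}\abs{\xi_2},
  \qquad
  \abs{\xi_0}^2-d^2 = 2\abs{\xi_1}\abs{\xi_2}\bigl(1-\cos\theta_{12}\bigr)\sim \abs{\xi_1}\abs{\xi_2}\theta_{12}^2,
\]
i.e.\ exactly swapped relative to what you recorded. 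Consequently your assignment of identities to cases at the end is inverted: it is in the $\pm_1\neq\pm_2$ case (where $\xi_1,\xi_2$ are nearly antiparallel for small $\theta_{12}$) that $\abs{\xi_0}$ can be much smaller than $s$, and there one divides $\abs{\xi_0}^2-d^2\sim\abs{\xi_1}\abs{\xi_2}\theta_{12}^2$ by $\abs{\xi_0}+\abs{d}\sim\abs{\xi_0}$ to get $\abs{\hypwt_0}\sim\abs{\xi_0}-\abs{d}\sim\abs{\xi_1}\abs{\xi_2}\theta_{12}^2/\abs{\xi_0}$. In the $\pm_1=\pm_2$ case, $s+\abs{\xi_0}\sim\max(\abs{\xi_1},\abs{\xi_2})$ always (since $\abs{\xi_0}\ge\abs{d}$), so $\abs{\hypwt_0}\sim s-\abs{\xi_0}\sim\min(\abs{\xi_1},\abs{\xi_2})\theta_{12}^2$ directly from your first identity. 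Once the two cosine identities are written correctly, the rest of your argument (including the observation that the dominance hypothesis forces $\abs{\hypwt_0}\sim\bigabs{\pm_0\abs{\xi_0}\mp_1\abs{\xi_1}\mp_2\abs{\xi_2}}$) goes through without change.
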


For example, combining the first part of the lemma with Theorem \ref{A:Thm1}, we immediately obtain the following null form estimate, assuming $\widehat{u_j} \subset \dot K^{\pm_j}_{N_j,L_j}$:
$$
  \Bignorm{ \Proj_{K^{\pm_0}_{N_0,L_0}}\!\!
  \nullform(u_1,u_2)}
  \lesssim \left[ N_0 \Lmin^{012} (\Lmed^{012}\Lmax^{012})^{1/2} \right]^{1/2}
  \norm{u_1}\norm{u_2}.
$$

The key point in the next result is that we are able to exploit concentration of the Fourier supports along null rays, due to the null form; for a standard product such concentrations do not give any improvement, since in the worst case the thickened cones already intersect along a null ray (approximately, assuming $L_1, L_2$ small relative to $N_1, N_2$).

\begin{theorem}\label{N:Thm2} Given $r > 0$ and $\omega \in \mathbb S^1$, let $T_r(\omega) \subset \R^2$ be the strip of width $r$ centered on $\R\omega$. Then, assuming $u_1,u_2$ satisfy $\widehat{u_j} \subset \dot K^{\pm_j}_{N_j,L_j}$, 
$$
  \norm{\nullform(\Proj_{\R \times T_r(\omega)} u_1,u_2)}
  \lesssim \left( r L_1 L_2 \right)^{1/2}
  \norm{u_1}\norm{u_2}.
$$
\end{theorem}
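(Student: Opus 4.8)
The strategy is to apply Lemma~\ref{B:Lemma1} (Cauchy--Schwarz) in the variant with the null-form weight absorbed, so that after the reduction we must bound
$$
  \sup_{X_0} \int_{A_1 \cap T_r(\omega) \times \R} \theta_{12}^2 \, \delta(X_0 - X_1 - X_2) \, dX_1,
$$
where $A_j = \dot K^{\pm_j}_{N_j,L_j}$ and the strip restriction forces $\xi_1 \in T_r(\omega)$. The point of the theorem is that $\theta_{12}^2$ is small precisely where the thickened cones overlap along a null ray, and that the strip further pins down the location of $\xi_1$. So the first step is to set up this reduction carefully, keeping track of the fact that $\theta_{12} = \theta(\pm_1 \xi_1, \pm_2 \xi_2)$ and $\xi_0 = \xi_1 + \xi_2$ are all linked.

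The key geometric step: for $\xi_1 \in T_r(\omega)$ with $\abs{\xi_1} \sim N_1$, the direction $\pm_1\xi_1/\abs{\xi_1}$ lies within angle $O(r/N_1)$ of $\pm\omega$. Given a fixed output $X_0$, and given $\xi_1$, the vector $\xi_2 = \xi_0 - \xi_1$ is determined, so $\theta_{12}$ is determined; the integral over the remaining free variable (the hyperbolic weight $\hypwt_1$, equivalently $\tau_1$, ranging over an interval of length $O(L_1)$, with $\tau_2$ then constrained to lie in an $O(L_2)$ interval by the support of $A_2$) contributes at most $O(\Lmin^{12})$ — or rather, one arranges the $\delta$-integral so that one of the two small weights $L_1, L_2$ is gained directly. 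This leaves an integral over $\xi_1 \in T_r(\omega)$, $\abs{\xi_1} \sim N_1$, of $\theta_{12}^2$ against (after the $\tau$-integration) a density of size $\sim \Lmin^{12}$ times a characteristic function cutting $\xi_2$ to an annular/angular region. I would then split into the cases $\pm_1 = \pm_2$ and $\pm_1 \neq \pm_2$ and the low-output/high-output dichotomy \eqref{A:140}, using the second part of Lemma~\ref{I:Lemma1} (or direct geometry) to see that on the support of the integrand, $\theta_{12}$ is comparable to an expression controlled by the transverse displacement within the strip; integrating $\theta_{12}^2$ over a strip of width $r$ is where the single power of $r$ comes out, and one checks the remaining factor combines with the $\tau$-integration to give exactly $L_1 L_2$ with no leftover powers of $N$.

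The main obstacle I expect is the bookkeeping in the $\xi$-integral: one must show that the two-dimensional integral $\int_{T_r(\omega),\ \abs{\xi_1}\sim N_1} \theta_{12}^2 \, \chi_{A_2}(\xi_0 - \xi_1) \, d\xi_1$ is bounded by $\sim r L_2$ (the second $L$ coming from the thickness of the $\xi_2$-annulus/slab seen through the constraint), uniformly in $X_0$, and that this is consistent across all sign and output-size cases. The subtlety is that $\theta_{12}$ can degenerate to zero along the null ray, so one cannot simply bound $\theta_{12}^2 \le 1$ and count area — that would lose a factor of $N_1/r$ or worse; instead one genuinely needs that the measure of $\{\xi_1 \in T_r(\omega) : \theta_{12} \le \sigma\}$ grows like $r \cdot (\text{something})$ and integrate the tail. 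Once that localization estimate is in hand, assembling the pieces (Cauchy--Schwarz reduction, $\tau$-integration giving $\Lmin^{12}$, $\xi$-integration giving the other $L$ and the $r$) is routine, and I would also remark that by the tiling symmetry noted after Theorem~\ref{Z:Thm1} it does not matter which factor carries $\Proj_{\R \times T_r(\omega)}$.
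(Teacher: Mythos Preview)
Your overall strategy --- reduce via a weighted Cauchy--Schwarz to the uniform bound
\[
  \sup_{X_0}\, \Lmin^{12}\int_R \theta_{12}^2\,d\xi_1 \lesssim rL_1L_2,
  \qquad
  R = \bigl\{\xi_1 \in T_r(\omega): \abs{\xi_1}\sim N_1,\; \abs{\xi_0-\xi_1}\sim N_2,\; f(\xi_1)=\tau_0+O(\Lmax^{12})\bigr\}
\]
with $f(\xi_1)=\pm_1\abs{\xi_1}\pm_2\abs{\xi_0-\xi_1}$ --- is viable and genuinely different from the paper's route. The paper instead performs a Whitney decomposition in $\theta_{12}$, proves the sector estimates \eqref{E:20}--\eqref{E:22}, and then, to sum over dyadic $\gamma$ without a logarithm, exploits orthogonality in the output via $\abs{\hypwt_0}\sim\Nmin^{12}\gamma^2$ from Lemma~\ref{I:Lemma1}. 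Your fixed-$X_0$ reduction has this orthogonality built in, since for each $X_0$ only a narrow $\theta_{12}$-range is compatible with the constraint on $f$.

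But your plan for the $\xi_1$-integral has a gap. The layer-cake argument you describe (bound the measure of $\{\theta_{12}\le\sigma\}$ and integrate the tail) runs into a logarithmic divergence: on $R\cap\{\theta_{12}\sim\gamma\}$ the area is $\lesssim r\Lmax^{12}/\gamma^2$ (this is the content of \eqref{E:20}), so $\theta_{12}^2$ times area contributes $r\Lmax^{12}$ \emph{per dyadic $\gamma$}, and summing loses a $\log$. You say Lemma~\ref{I:Lemma1} relates $\theta_{12}$ to ``the transverse displacement within the strip'' --- that is not what the lemma does. It relates $\theta_{12}$ to the hyperbolic weight $\hypwt_0=\bigabs{\abs{\tau_0}-\abs{\xi_0}}$, and \emph{that} is what (for fixed $X_0$ with $\abs{\hypwt_0}\gg\Lmax^{12}$) pins $\theta_{12}$ to essentially one dyadic scale and kills the log. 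Either you must invoke the lemma in this way and split into the two regimes $\abs{\hypwt_0}\lesssim\Lmax^{12}$ versus $\abs{\hypwt_0}\gg\Lmax^{12}$, or --- more directly --- pass to polar coordinates $\xi_1=\rho e$ and use the identity
\[
  \partial_\rho f = \pm_1(1-\cos\theta_{12}),
\]
which is of constant sign, making $f$ monotone in $\rho$ and $\theta_{12}^2/\abs{\partial_\rho f}$ uniformly bounded; then the change of variables $(\rho,e)\mapsto(f,e)$ gives $\int_R\theta_{12}^2\,d\xi_1 \lesssim N_1\cdot\Lmax^{12}\cdot(r/N_1)=r\Lmax^{12}$ in one stroke, with no case-splitting on signs or output size. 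Neither fix appears in your plan as written.
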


The proof is given in section \ref{E}.

\subsection{Concentration/nonconcentration null form estimate}\label{Z}

It is natural to ask whether there is any improvement in Theorem \ref{N:Thm2} if we restrict the output $\xi_0$ to a ball $B \subset \R^2$ of radius $\delta$ and arbitrary center. Inserting $\Proj_{\R \times B}$ in front of the null form we obviously get an improvement if $\delta \lesssim r$, since then we can apply Theorem \ref{N:Thm3}. So let us assume $\delta \gg r$. Then Fourier restriction to $B$ will have no effect in directions perpendicular to $\omega$, so we may as well replace $\Proj_{\R \times B}$ by $\Proj_{\xi_0 \cdot \omega \in I_0}$, where $I_0 \subset \R$ is a compact interval of length $\abs{I_0} = \delta$. In this situation we have the following result, where we limit attention to interactions with $\theta_{12} \ll 1$; we denote this modified null form by $\truenullform$.
 
\begin{theorem}\label{N:Thm4}
Let $r > 0$, $\omega \in \mathbb S^1$ and $I_0 \subset \R$ a compact interval. Assume that $u_1,u_2$ satisfy $\widehat{u_j} \subset \dot K^{\pm_j}_{N_j,L_j}$, and that $r \ll \Nmin^{12}$. Then
$$
  \norm{\Proj_{\xi_0 \cdot \omega \in I_0} \truenullform(\Proj_{\R \times T_r(\omega)} u_1,u_2)}
  \lesssim \left( r L_1 L_2 \right)^{1/2} 
  \left( \sup_{I_1} \norm{\Proj_{\xi_1 \cdot \omega \in I_1} u_1} \right)
  \norm{u_2},
$$
where the supremum is over all translates $I_1$ of $I_0$.
\end{theorem}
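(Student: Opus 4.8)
The plan is to combine the strategy behind Theorem \ref{N:Thm2} (exploiting the strip restriction via the null form and Lemma \ref{I:Lemma1}) with a careful book-keeping of the $\omega$-direction of the output frequency, so that the extra gain from $\Proj_{\xi_0 \cdot \omega \in I_0}$ can be traded against a localization of $u_1$ in the same direction. First I would set up the standard dyadic framework: fix dyadic sizes $\Lmax,\Lmin$ for the hyperbolic weights $\hypwt_0,\hypwt_1,\hypwt_2$ (recall $\hypwt_j = -\tau_j \pm_j \abs{\xi_j}$), so that the modified null form $\truenullform$ is, up to the angular weight $\theta_{12}$, a sum of pieces each of which has $\widehat{u_j}$ supported where $\abs{\hypwt_j} \sim L_j$ and $\theta_{12} \ll 1$. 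Because we are on the null cone with small $\theta_{12}$, Lemma \ref{I:Lemma1} gives the crucial quantitative link: in the case $\pm_1 = \pm_2$ one has $\max_j \abs{\hypwt_j} \gtrsim \Nmin^{12}\theta_{12}^2$, and in the case $\pm_1 \neq \pm_2$ one has the analogous bound with the right-hand side replaced by $\abs{\xi_1}\abs{\xi_2}\theta_{12}^2/\abs{\xi_0}$. Thus $\theta_{12}^2$ is controlled by (a dyadic block of) the largest hyperbolic weight divided by an elliptic weight, and the factor $\theta_{12}$ coming from the null form produces, after summation, a gain that matches the $(L_1 L_2)^{1/2}$ on the right-hand side exactly as in Theorem \ref{N:Thm2}.

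The new ingredient is the output localization. The strip restriction $\widehat{u_1} \subset \R \times T_r(\omega)$ forces $\xi_1$ to lie within distance $\lesssim r$ of the line $\R\omega$; since $\xi_0 = \xi_1 + \xi_2$ and, on the region $\theta_{12} \ll 1$ with $\pm_1\xi_1$ nearly parallel to $\pm_2\xi_2$, both $\xi_1$ and $\xi_2$ are nearly parallel to $\omega$, the constraint $\xi_0 \cdot \omega \in I_0$ with $\abs{I_0} = \delta$ pins $\xi_1 \cdot \omega$ (and $\xi_2 \cdot \omega$) to within $O(\delta + r) = O(\delta)$. I would make this rigorous by a tiling argument: decompose $\R$ into translates $\{I_1^{(k)}\}$ of $I_0$, write $u_1 = \sum_k \Proj_{\xi_1 \cdot \omega \in I_1^{(k)}} u_1$, and observe that for fixed output slab $\xi_0 \cdot \omega \in I_0$ only $O(1)$ of the pieces $u_1^{(k)}$ can contribute (because $\xi_2 \cdot \omega$ ranges over an interval of length comparable to that of $u_2$'s own $\omega$-projection, which after a further finitely-overlapping decomposition of $u_2$ we may also take of length $\lesssim \delta$; alternatively, since the location of $I_0$ is arbitrary this is a clean almost-orthogonality in the $\omega$-variable). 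This replaces $\norm{u_1}$ on the right-hand side by $\sup_{I_1} \norm{\Proj_{\xi_1 \cdot \omega \in I_1} u_1}$, which is exactly the claimed improvement, while the bilinear constant $(rL_1L_2)^{1/2}$ is unchanged because the $\omega$-localization of $u_1$ does not cost anything in the volume estimate — the strip width $r$ was already the binding constraint in that direction.

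For the core bilinear estimate on each dyadic piece I would invoke Lemma \ref{B:Lemma1}, reducing to a bound on $\sup_{X_0}\abs{A_1 \cap (X_0 - A_2)}$ where $A_1$ is the strip-truncated thickened-cone piece (intersected with $\{\xi_1 \cdot \omega \in I_1\}$) and $A_2$ the thickened-cone piece for $u_2$, and estimate this volume by integrating in the $\omega$-direction (range $\lesssim r$, from the strip) against the area of the transverse slice, which is controlled as in the proof of Theorem \ref{N:Thm2} using the thickenings $L_1, L_2$ and the angular constraint. I expect the main obstacle to be the bookkeeping at the interface of three simultaneous constraints — the angular decomposition forced by $\theta_{12} \ll 1$, the strip $T_r(\omega)$, and the output slab $I_0$ — and in particular verifying cleanly that these decompositions remain finitely overlapping so that one genuinely gets $\sup_{I_1}$ rather than an $\ell^2$-sum over $I_1$'s; this is where the hypothesis $r \ll \Nmin^{12}$ (ensuring the strip is thin on the scale of the annuli, so the geometry near the null ray is the expected one) and the arbitrariness of the position of $I_0$ both get used. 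Apart from that, the estimate should follow the template of Theorems \ref{N:Thm2} and \ref{Z:Thm1} closely, with Lemma \ref{I:Lemma1} doing the work of converting the null-form angle into the decisive $L$-gain.
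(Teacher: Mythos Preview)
Your proposal has a genuine gap at the step where you pass from the tiling of $u_1$ (and $u_2$) in the $\omega$-direction to the $\sup_{I_1}$ on the right-hand side. The constraint $\xi_0\cdot\omega\in I_0$ together with $\xi_0=\xi_1+\xi_2$ only gives the diagonal condition $\xi_1\cdot\omega+\xi_2\cdot\omega\in I_0$; it does \emph{not} pin $\xi_1\cdot\omega$ to an interval of length $O(\abs{I_0})$, because $\xi_2\cdot\omega$ still ranges over an interval of length comparable to $\Nmin^{12}$ (recall $\xi_2$ is nearly parallel to $\omega$ and $\abs{\xi_2}\sim N_2$). After decomposing both $u_1$ and $u_2$ into $\omega$-slabs of width $\abs{I_0}$, you therefore have $\sim \Nmin^{12}/\abs{I_0}$ contributing diagonal pairs $(I_1,I_2)$, not $O(1)$. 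If on each pair you only have the unlocalized constant $(rL_1L_2)^{1/2}$ from Theorem~\ref{N:Thm2}, then taking $\sup$ over $I_1$ and applying Cauchy--Schwarz in $I_2$ loses a factor $(\Nmin^{12}/\abs{I_0})^{1/2}$, and the outputs of the different pieces all land in the same slab $\xi_0\cdot\omega\in I_0$, so there is no obvious orthogonality to rescue the sum. Your sentence ``the bilinear constant $(rL_1L_2)^{1/2}$ is unchanged because the $\omega$-localization of $u_1$ does not cost anything'' is exactly the place where the argument breaks: the localization is free, but without an accompanying \emph{gain} you cannot afford the diagonal summation.

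The paper's proof supplies this missing gain in two different ways depending on the angular scale. For $\gamma\lesssim\gamma_0$ it proves per-piece estimates (the analogues of \eqref{E:21}, \eqref{E:22}) that carry an explicit factor $\abs{I_0}^{1/2}$ in the constant, obtained from Lemma~\ref{B:Lemma2} (not Lemma~\ref{B:Lemma1}); this factor exactly cancels the $(\Nmin^{12}/\abs{I_0})^{1/2}$ loss from the diagonal sum. For $\gamma_0\ll\gamma\ll 1$ the required per-piece gain is not available from a volume bound alone, and the paper instead uses a genuinely different mechanism: the strip restriction forces $\supp\widehat{u_1}$ into a thickened null hyperplane $H_d(\omega)$, and after a further tiling (Lemma~\ref{J:Lemma2}) one can assume $\supp\widehat{u_2}$ lies in a translate of the same hyperplane, so that $-\tau_2+\xi_2\cdot\omega$ is essentially constant. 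This converts the $\xi_2\cdot\omega$-tiling into a $\tau_2$-tiling, for which one \emph{does} have orthogonality, and Lemma~\ref{J:Lemma1} packages this to produce the $\sup_{I_1}$ bound. Your proposal does not anticipate either ingredient; in particular the null-hyperplane orthogonality (which is where the hypothesis $r\ll\Nmin^{12}$ and the restriction to $\theta_{12}\ll 1$ really get used) is the heart of the matter in the main regime $\gamma\gg\gamma_0$.
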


The proof is given in section \ref{J}. The main point is that we get an improvement over Theorem \ref{N:Thm2} by concentrating the output in a strip $\xi_0 \cdot \omega \in I_0$, provided that the Fourier support of $u_1$ is not concentrated in a translated strip $\xi_1 \cdot \omega \in I_1$, but is more spread out. Because of this, we call it a concentration/nonconcentration null form estimate.

\subsection{Nonconcentration low output estimate}\label{L}

In the case $N_0 \ll N_1 \sim N_2$ (low output), Theorem \ref{A:Thm1} implies, assuming $\widehat{u_j} \subset \dot K^{\pm_j}_{N_j,L_j}$,
$$
  \Bignorm{\Proj_{K^{\pm_0}_{N_0,L_0}}\!\!(u_1u_2)}
  \lesssim \left[ N_0 N_1^{1/2} \Lmin^{012} (\Lmed^{012})^{1/2} \right]^{1/2} \norm{u_1} \norm{u_2}.
$$
In general this is optimal, as can be seen by testing it on functions which concentrate along a null ray in Fourier space, but one may hope to do better if the Fourier supports are less concentrated. To detect radial nonconcentration we introduce a modified $L^2$ norm as follows. Let
$$
  \Omega(\gamma) \subset \mathbb S^1 \qquad (0 < \gamma \le \pi )
$$
be a maximal $\gamma$-separated subset of the unit circle $\mathbb S^1$. Since the cardinality of $\Omega(\gamma)$ is comparable to $1/\gamma$, we see that for all $N,r > 0$, 
$$
  \norm{\Proj_{\abs{\xi} \sim N} u}
  \lesssim
  \norm{u}_{N,r}
  \equiv
  \left(\frac{N}{r}\right)^{1/2}
  \sup_{\omega \in \mathbb S^1}
  \norm{\Proj_{\abs{\xi} \sim N} \Proj_{T_r(\omega)} u},
$$
and the less radial concentration we have in the spatial Fourier support, the closer the two norms are to being comparable. In the extreme case of circular symmetry in $\xi$, we have $\norm{u} \sim \norm{\Proj_{\abs{\xi} \sim N} u}_{N,r}$. 

We then have the following result.

\begin{theorem}\label{L:Thm1} Assume $N_0 \ll N_1 \sim N_2$, and define $r = (N_0\Lmax^{012})^{1/2}$. Assume that $u_1,u_2 \in L^2(\R^{1+3})$ satisfy $\widehat{u_j} \subset \dot K^{\pm_j}_{N_j,L_j}$. Then
$$
  \Bignorm{\Proj_{K^{\pm_0}_{N_0,L_0}}\!\!(u_1u_2)}
  \lesssim \left( N_0 \Lmin^{012} (\Lmed^{012}\Lmax^{012})^{1/2} \right)^{1/2} \norm{u_1}
  \norm{u_2}_{N_2,r}.
$$
In other words,
$$
  \Bignorm{\Proj_{K^{\pm_0}_{N_0,L_0}}\!\!(u_1u_2)}
  \lesssim \left( N_0^{1/2} N_1 \Lmin^{012} (\Lmed^{012})^{1/2} \right)^{1/2} \norm{u_1}
  \sup_{\omega \in \mathbb S^1} \norm{\Proj_{\R \times T_{r}(\omega)}u_2}.
$$
\end{theorem}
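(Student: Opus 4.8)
\textbf{Proof plan for Theorem \ref{L:Thm1}.}

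The plan is to reduce the estimate to a dyadic angular decomposition in the worst interaction geometry and then apply a restriction-to-a-strip estimate, exploiting the radial-nonconcentration norm $\norm{\cdot}_{N_2,r}$ to absorb the angular summation. First I would observe that, as remarked in the excerpt, it suffices to work with annuli, so assume $\widehat{u_j}$ supported in $\dot K^{\pm_j}_{N_j,L_j}$; I will prove the second displayed form, which by definition of $\norm{\cdot}_{N_2,r}$ with $r = (N_0 \Lmax^{012})^{1/2}$ is equivalent to the first. Next, decompose the spatial output annulus $\abs{\xi_0} \sim N_0$ by the size of $\theta_{12}$: dyadically set $\theta_{12} \sim \gamma$ for $\gamma \lesssim 1$ (the case $\gamma \gtrsim 1$ being handled by the crude bound). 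In the low-output regime $N_0 \ll N_1 \sim N_2$, the transversality relation from Lemma~\ref{I:Lemma1}, combined with $\xi_0 = \pm_1\xi_1 \mp (\mp_2)\xi_2$, forces $\gamma \lesssim N_0/N_1$ and moreover, by the usual geometry of the sum of two nearly-antipodal (or nearly-parallel) vectors of size $N_1$ whose sum has size $N_0$, relates $\gamma$ to the hyperbolic weights via $\Lmax^{012} \gtrsim N_1 \gamma^2$ when $\pm_1 = \pm_2$, or $\Lmax^{012} \gtrsim N_0\gamma^2 \cdot (N_1/N_0)$-type bounds when $\pm_1 \neq \pm_2$; in all cases one extracts $\gamma \lesssim (\Lmax^{012}/N_1)^{1/2}$, i.e. $N_1\gamma \lesssim r$. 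This is the crucial numerology that makes the strip of width $r$ the natural one.

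For fixed $\gamma$, cover $\mathbb S^1$ by the $\gamma$-separated set $\Omega(\gamma)$ and, for each $\omega \in \Omega(\gamma)$, restrict $\pm_1\xi_1$ to the sector $\theta(\pm_1\xi_1,\omega) \lesssim \gamma$; since $\theta_{12} \sim \gamma$, this simultaneously localizes $\mp_2\xi_2$ to the sector around $\pm\omega$ up to angle $O(\gamma)$, and $\xi_0$ to the strip $T_{CN_1\gamma}(\omega) \supset T_r(\omega)$-scale region — more precisely to a tube of cross-section $O(N_1\gamma) \lesssim r$ in the $\omega^\perp$ direction. The outputs for distinct $\omega \in \Omega(\gamma)$ are then almost orthogonal in $L^2$, both because the $\xi_1$-sectors are essentially disjoint and because the $\xi_0$-supports have bounded overlap (this uses $N_0 \ll N_1$, so the strips $T_{N_1\gamma}(\omega)$ meeting $\abs{\xi_0}\sim N_0$ have $O(1)$ overlap when $\gamma \gtrsim N_0/N_1$; for smaller $\gamma$ one decomposes $\xi_0$ further along the annulus). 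Hence by Cauchy–Schwarz in $\omega$ it suffices to bound a single sectorial piece and then sum $\ell^2$ in $\omega \in \Omega(\gamma)$, paying a factor $(\#\Omega(\gamma))^{1/2} \sim \gamma^{-1/2}$ on the $u_2$ side only if we can bound the single piece by the full $\norm{u_1}$ times the \emph{strip} norm $\norm{\Proj_{\R\times T_r(\omega)}u_2}$.

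For the single-sector estimate I would use Lemma~\ref{B:Lemma1}: bound the operator norm by the square root of $\sup_{X_0}\abs{A_1 \cap (X_0 - A_2)}$, where now $A_1 = \dot K^{\pm_1}_{N_1,L_1}\cap\{\theta(\pm_1\xi_1,\omega)\lesssim\gamma\}$ and $A_2 = \dot K^{\pm_2}_{N_2,L_2}\cap T_r(\omega)$-localized, with $X_0$ in the thin output slab. Computing this $1+2$-volume: the $\tau_1$-extent is $O(L_1)$; the $\xi_1$-extent is the intersection of two thickened circles (radii $\sim N_1$, thicknesses $L_1$ and, after accounting for the $\tau_0$-constraint via $\hypwt_0$, essentially $\Lmax^{012}$ or the strip width $r$) restricted to the $\gamma$-sector — by Lemma~\ref{A:Lemma1} (or directly, since the sector already localizes one angular coordinate to length $N_1\gamma\lesssim r$) this area is $\lesssim r\cdot\min(L_1,L_2,\ldots)$, giving volume $\lesssim L_1\Lmin^{012}r$ up to the extra $(\Lmed^{012})^{1/2}/(\Lmax^{012})^{1/2}$-type factors, which combine to reproduce $\Lmin^{012}(\Lmed^{012}\Lmax^{012})^{1/2}$ after one keeps careful track of which weight is pinned by the $\delta$-functions. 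Putting it together: single piece $\lesssim (N_0\Lmin^{012}(\Lmed^{012}\Lmax^{012})^{1/2})^{1/2}/(\#\Omega(\gamma))^{1/2}$ against $\norm{\Proj_{\text{sector}}u_1}\norm{\Proj_{\R\times T_r(\omega)}u_2}$; summing $\ell^2$ over $\omega\in\Omega(\gamma)$ uses $\sum_\omega\norm{\Proj_{\text{sector}_\omega}u_1}^2\lesssim\norm{u_1}^2$ and $\sup_\omega\norm{\Proj_{\R\times T_r(\omega)}u_2}$ on the other factor, and the $(\#\Omega(\gamma))^{1/2}$ cancels. Finally sum over dyadic $\gamma\lesssim(\Lmax^{012}/N_1)^{1/2}$, which converges geometrically and costs only a constant.

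The main obstacle I anticipate is the bookkeeping of the three hyperbolic weights in the single-sector volume computation: one must correctly identify, in each of the two sign cases $\pm_1=\pm_2$ and $\pm_1\neq\pm_2$, which of $\abs{\hypwt_0},\abs{\hypwt_1},\abs{\hypwt_2}$ is forced small and which controls the transverse thickness, so that the product of the three relevant length scales collapses exactly to $\Lmin^{012}(\Lmed^{012}\Lmax^{012})^{1/2}$ and not to something larger; this is exactly the kind of case analysis carried out for the 3d analogue in \cite{Selberg:2008c}, and Lemma~\ref{I:Lemma1} is the tool that makes it work. A secondary technical point is verifying the $O(1)$-overlap of the output strips $T_{N_1\gamma}(\omega)$ inside $\abs{\xi_0}\sim N_0$ uniformly in the dyadic parameter $\gamma$, which is where the hypothesis $N_0\ll N_1$ (and the specific choice $r=(N_0\Lmax^{012})^{1/2}$, guaranteeing $N_1\gamma\lesssim r\lesssim N_0$ once $\gamma\lesssim(\Lmax^{012}/N_1)^{1/2}$ and $\Lmax^{012}\lesssim N_0$, with the remaining range treated separately) is used in an essential way.
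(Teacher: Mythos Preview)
Your overall strategy (angular decomposition at a dyadic scale $\gamma$, a single-sector bilinear bound, then summation using orthogonality) is the same as the paper's, but the orthogonality step as you describe it does not work, and this is a genuine gap rather than a bookkeeping issue.

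You claim the outputs for distinct $\omega \in \Omega(\gamma)$ are almost orthogonal because the $\xi_0$-supports lie in strips $T_{CN_1\gamma}(\omega)$ with $O(1)$ overlap on $\abs{\xi_0} \sim N_0$. This is false. A point $\xi_0$ with $\abs{\xi_0} \sim N_0$ lies in $T_w(\omega)$ iff $\theta(\xi_0,\pm\omega) \lesssim w/N_0$, so the number of $\gamma$-separated $\omega$'s for which this holds is $\sim w/(N_0\gamma)$. With $w \sim N_1\gamma$ this gives overlap $\sim N_1/N_0 \gg 1$, uniformly in $\gamma$; the low-output geometry makes purely spatial output orthogonality impossible. (Relatedly, your numerology ``$\gamma \lesssim (\Lmax^{012}/N_1)^{1/2}$, i.e.\ $N_1\gamma \lesssim r$'' is off by $(N_1/N_0)^{1/2}$, since $r=(N_0\Lmax^{012})^{1/2}$.) Disjointness of the $\xi_1$-sectors gives $\ell^2$-summability of $\norm{u_1^{\gamma,\omega}}$, but it does not make the \emph{products} orthogonal in $L^2(\R^{1+2})$.

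The paper's proof (carried over from the 3d argument in \cite{Selberg:2008c}) obtains the needed almost-orthogonality from the \emph{space-time} structure: after the sector decomposition, the output $(\tau_0,\xi_0)$ for the $\omega$-piece is forced into a thickened null hyperplane $H_d(\omega)$ (via \eqref{ConePlane}), and Lemma~\ref{L:Lemma} bounds $\sum_{\omega \in \Omega(\gamma)} \chi_{H_d(\omega)}(\tau_0,\xi_0) \lesssim 1 + (d/(N_0\gamma^2))^{1/2}$ on $\abs{\xi_0}\sim N_0$. This lemma, which you do not invoke, is precisely the missing ingredient; it is what replaces the failed spatial-strip overlap. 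The single-piece estimate is then taken directly from Theorem~\ref{A:Thm1} rather than from a fresh volume computation via Lemmas~\ref{B:Lemma1} and~\ref{A:Lemma1}. Your volume-based approach to the single piece could in principle be made to work, but without the null-hyperplane orthogonality the summation step cannot close.
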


We omit the proof, since it is just a repetition of the proof of the analogous 3d result, given in \cite{Selberg:2008c}, up to some obvious modifications. Specifically, one must use Theorem \ref{A:Thm1} instead of its 3d counterpart \cite[Theorem 1.1]{Selberg:2008c}, and one must replace \cite[Lemma 2.2]{Selberg:2008c} by its 2d analogue, which we now state. Writing
$$
  H_d(\omega) = \left\{ (\tau,\xi) \in \R^{1+2} \colon \abs{-\tau + \xi \cdot \omega} \le d \right\} \qquad (d > 0, \; \omega \in \mathbb S^1)
$$
for a thickening of the null hyperplane $-\tau + \xi \cdot \omega = 0$, we have:

\begin{lemma}\label{L:Lemma} Suppose $N,d,\gamma > 0$. Then
$$
  \sum_{\omega \in \Omega(\gamma)} \chi_{H_d(\omega)}(\tau,\xi)
  \lesssim 1 + \left( \frac{d}{N\gamma^2} \right)^{1/2}
  \qquad \text{for all $(\tau,\xi) \in \R^{1+2}$ with $\abs{\xi} \sim N$.}
$$
\end{lemma}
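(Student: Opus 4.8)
The plan is to reduce, after a rotation of coordinates, to a one-dimensional counting problem for a $\gamma$-separated set on the circle, and then to exploit the flatness of $\cos$ at its critical points to extract the square-root gain.

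First I would fix $(\tau,\xi)$ with $\abs{\xi} \sim N$; the quantity to be bounded is the number of $\omega \in \Omega(\gamma)$ with $\bigabs{\,\xi\cdot\omega - \tau\,} \le d$. Rotating so that $\xi = (\abs{\xi},0)$ and writing $\omega = (\cos\psi,\sin\psi)$ with $\psi \in [0,2\pi)$, this becomes the number of elements of a $\gamma$-separated subset of the circle (a rotate of $\Omega(\gamma)$ is still $\gamma$-separated) that lie in
\[
  E \;=\; \bigl\{ \psi \in [0,2\pi) \colon \bigabs{\,\abs{\xi}\cos\psi - \tau\,} \le d \bigr\}.
\]

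The key estimate is then $\abs{E} \lesssim \min\bigl(1,(d/N)^{1/2}\bigr)$. To prove it I would split $[0,2\pi)$ according to whether $\abs{\sin\psi} < (d/N)^{1/2}$ or not. The first region is contained in $(d/N)^{1/2}$-neighborhoods of $0$ and of $\pi$, hence has measure $\lesssim (d/N)^{1/2}$ (and $\le 2\pi$ in any case). On the complementary region, which is a union of at most two intervals, the function $\psi \mapsto \abs{\xi}\cos\psi$ has derivative $\abs{\xi}\abs{\sin\psi} \gtrsim N(d/N)^{1/2} = (dN)^{1/2}$ in absolute value, so it is strictly monotone on each of those intervals, and the preimage of the interval $[\tau-d,\tau+d]$ of length $2d$ therefore has measure $\lesssim d/(dN)^{1/2} = (d/N)^{1/2}$. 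Adding the two contributions gives the claim. This square-root loss, coming precisely from the vanishing of the derivative of $\cos$ at $0$ and $\pi$, is the \emph{crux} of the argument; it is the same phenomenon that is responsible for the square root in Lemma~\ref{A:Lemma1}.

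Finally, $E$ is the preimage under $\cos\colon[0,2\pi)\to[-1,1]$ of an interval, hence a union of at most two arcs, of lengths $\ell_1,\ell_2$ with $\ell_1+\ell_2 = \abs{E}$. A $\gamma$-separated subset of the circle meets an arc of length $\ell$ in at most $1+\ell/\gamma$ points, so the number of points of $\Omega(\gamma)$ in $E$ is at most $2 + \abs{E}/\gamma \lesssim 1 + (d/N)^{1/2}/\gamma = 1 + \bigl(d/(N\gamma^2)\bigr)^{1/2}$, which is the asserted bound. I do not anticipate any real obstacle beyond setting up the estimate for $\abs{E}$ cleanly; the rest is elementary counting.
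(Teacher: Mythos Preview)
Your proof is correct and follows essentially the same approach as the paper: rotate so that $\xi=(\abs{\xi},0)$, observe that the admissible $\omega$'s lie in the intersection of $\mathbb S^1$ with a strip of width $O(d/N)$, bound the arclength of that intersection by $O((d/N)^{1/2})$, and then count $\gamma$-separated points on arcs. The only cosmetic difference is that the paper states the arclength bound geometrically (unit circle meets a thin strip in short arcs) while you derive it analytically via the derivative of $\cos$; both are the same computation.
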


\begin{proof}
The left side equals $\# \left\{ \omega \in \Omega(\gamma) \colon \omega \in A \right\}$ where $A$ is the set of $\omega \in \mathbb S^1$ such that $\abs{-\tau+\xi\cdot\omega} \le d$, for given $\tau,\xi$ with $\abs{\xi} \sim N$. Without loss of generality assume $\xi = (\abs{\xi},0)$. Then
$$
  A \subset A' \equiv \left\{ \omega = (\omega^1,\omega^2) \in \mathbb S^1 \colon \omega^1 = \frac{\tau}{\abs{\xi}} + O\left(\frac{d}{N}\right) \right\}.
$$
Thus, $A'$ is the intersection of $\mathbb S^1$ and a thickened line with thickness comparable to $d/N$, so
$$
  \# \left\{ \omega \in \Omega(\gamma)\colon \omega \in A' \right\}
  \lesssim 1 + \frac{\text{length}(A')}{\gamma}.
$$
But $\text{length}(A') \lesssim (d/N)^{1/2}$, and the proof is complete.
\end{proof}

In preparation for the proofs of the above theorems we recall a few basic facts concerning decomposition of the spatial frequencies in a product into angular sectors.
For $\gamma \in (0,\pi]$ and $\omega \in \mathbb S^1$ we define
$$
  \Gamma_\gamma(\omega) = \left\{ \xi \in \R^2 \colon \theta(\xi,\omega) \le \gamma \right\}.
$$
Recall that $\Omega(\gamma)$ denotes a maximal $\gamma$-separated subset of $\mathbb S^1$. Thus,
\begin{equation}\label{A:200}
  1 \le \sum_{\omega \in \Omega(\gamma)} \chi_{\Gamma_\gamma(\omega)}(\xi)
  \le 5
  \qquad (\forall \xi \neq 0),
\end{equation}
where the left inequality holds by the maximality of $\Omega(\gamma)$, and the right inequality by the $\gamma$-separation, since the latter clearly implies
\begin{equation}\label{A:202}
  \#\left\{ \omega' \in \Omega(\gamma) : \theta(\omega',\omega) \le k\gamma \right\} \le 2k+1
\end{equation}
for any $k \in \N$ and $\omega \in \Omega(\gamma)$. We shall write
$$
  u_j^{\gamma,\omega} = \Proj_{\theta(\pm_j\xi,\omega) \le \gamma} u_j.
$$
Then by \eqref{A:200},
\begin{equation}\label{A:204}
  \norm{u_j}^2
  \sim
  \sum_{\omega \in \Omega(\gamma)}
  \norm{u_j^{\gamma,\omega}}^2,
\end{equation}
hence
\begin{equation}\label{A:208}
  \sum_{\genfrac{}{}{0pt}{1}{\omega_1,\omega_2 \in \Omega(\gamma)}
  {\theta(\omega_1,\omega_2) \lesssim \gamma}}
  \norm{u_1^{\gamma,\omega_1}} \norm{u_2^{\gamma,\omega_2}}
  \lesssim \norm{u_1}\norm{u_2},
\end{equation}
where we applied the Cauchy-Schwarz inequality and used \eqref{A:202} and \eqref{A:204}.

Next we note the following Whitney decomposition over angular variables.

\begin{lemma}\label{A:Lemma4} We have
$$
  1 \sim \sum_{\genfrac{}{}{0pt}{1}{0 < \gamma < 1}{\text{$\gamma$ \emph{dyadic}}}} 
  \sum_{\genfrac{}{}{0pt}{1}{\omega_1,\omega_2 \in \Omega(\gamma)}{3\gamma \le \theta(\omega_1,\omega_2) \le 12\gamma}}
  \chi_{\Gamma_\gamma(\omega_1)}(\xi_1) \chi_{\Gamma_\gamma(\omega_2)}(\xi_2),
$$
for all $\xi_1,\xi_2 \in \R^2 \setminus \{0\}$ with $\theta(\xi_1,\xi_2) > 0$.
\end{lemma}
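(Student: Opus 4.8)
The plan is to run the standard Whitney decomposition of the diagonal, but carried out on the unit circle with respect to the angular distance. Fix $\xi_1,\xi_2 \in \R^2 \setminus\{0\}$ with $\theta_{12} := \theta(\xi_1,\xi_2) > 0$, and write $e_j = \xi_j/\abs{\xi_j} \in \mathbb S^1$, so that $\theta(e_1,e_2) = \theta_{12}$. For each dyadic $\gamma \in (0,1)$ and each pair $\omega_1,\omega_2 \in \Omega(\gamma)$ with $3\gamma \le \theta(\omega_1,\omega_2) \le 12\gamma$, the product $\chi_{\Gamma_\gamma(\omega_1)}(\xi_1)\chi_{\Gamma_\gamma(\omega_2)}(\xi_2)$ equals $1$ precisely when $\theta(e_j,\omega_j) \le \gamma$ for $j=1,2$. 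So the claim is: the number of triples $(\gamma,\omega_1,\omega_2)$ for which this happens is bounded above and below by absolute constants.

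First I would establish the \emph{lower bound} (that the sum is $\gtrsim 1$, i.e.\ at least one triple contributes). Choose the dyadic $\gamma$ with $\gamma \le \theta_{12}/6 < 2\gamma$; this is possible since $0 < \theta_{12} < 2$ forces $\theta_{12}/6 < 1$, but one must check $\theta_{12}/6$ can also be bounded below — if $\theta_{12}$ is extremely small the corresponding $\gamma$ is simply a very small dyadic number, which is allowed since the sum ranges over all dyadic $\gamma \in (0,1)$. By the covering property \eqref{A:200}, pick $\omega_j \in \Omega(\gamma)$ with $\theta(e_j,\omega_j) \le \gamma$ for $j=1,2$. Then by the triangle inequality for the angular metric on $\mathbb S^1$,
\begin{equation*}
  \theta_{12} - 2\gamma \le \theta(\omega_1,\omega_2) \le \theta_{12} + 2\gamma,
\end{equation*}
and since $6\gamma \le \theta_{12} < 12\gamma$ we get $4\gamma \le \theta(\omega_1,\omega_2) \le 14\gamma$. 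This is slightly off from the stated window $[3\gamma,12\gamma]$; the fix is cosmetic — either widen the window in the statement (the constants $3$ and $12$ are not sharp) or choose the dyadic $\gamma$ with a slightly different threshold, e.g.\ $\gamma \le \theta_{12}/8 < 2\gamma$, so that $6\gamma \le \theta(\omega_1,\omega_2) \le 10\gamma \subset [3\gamma,12\gamma]$. Either way one triple contributes, giving the lower bound.

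For the \emph{upper bound}, suppose $(\gamma,\omega_1,\omega_2)$ is a contributing triple. From $\theta(e_j,\omega_j)\le\gamma$ and $3\gamma \le \theta(\omega_1,\omega_2) \le 12\gamma$ the triangle inequality gives $\gamma \le \theta_{12} \le 14\gamma$, so $\theta_{12}/14 \le \gamma \le \theta_{12}$: only $O(1)$ dyadic scales $\gamma$ can occur. For each such fixed $\gamma$, the point $\omega_1$ must lie within angular distance $\gamma$ of $e_1$, and $\Omega(\gamma)$ is $\gamma$-separated, so by \eqref{A:202} (with $k=1$) there are at most $3$ choices of $\omega_1$; likewise at most $3$ choices of $\omega_2$. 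Hence the total number of contributing triples is $O(1)$, which is the upper bound. The only mild obstacle is bookkeeping the geometric constants so that the separation window $[3\gamma,12\gamma]$ is genuinely hit in the lower-bound construction and genuinely forces $\gamma \sim \theta_{12}$ in the upper-bound step; since the angular triangle inequality on $\mathbb S^1$ holds with constant $1$, this is a routine matter of choosing the dyadic threshold appropriately, and no real difficulty arises. Summing over all $\xi_1,\xi_2$ with $\theta_{12} > 0$ then yields the pointwise identity $1 \sim \sum_{\gamma}\sum_{\omega_1,\omega_2}\chi_{\Gamma_\gamma(\omega_1)}(\xi_1)\chi_{\Gamma_\gamma(\omega_2)}(\xi_2)$.
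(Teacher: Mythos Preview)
Your argument is the standard Whitney-type decomposition on $\mathbb S^1$, which is exactly what the paper has in mind; note that the paper itself omits the proof entirely, calling it ``straightforward.'' The structure of your argument (lower bound by exhibiting one contributing triple at the appropriate dyadic scale, upper bound by showing only $O(1)$ scales and $O(1)$ pairs per scale can contribute) is correct.

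One small arithmetic correction: neither of your proposed thresholds actually lands in the window $[3\gamma,12\gamma]$. With $\gamma \le \theta_{12}/8 < 2\gamma$ you get $8\gamma \le \theta_{12} < 16\gamma$, hence $\theta(\omega_1,\omega_2) \in [6\gamma,18\gamma]$, not $[6\gamma,10\gamma]$. The clean choice is to take the dyadic $\gamma$ with $\gamma \le \theta_{12}/5 < 2\gamma$, so that $5\gamma \le \theta_{12} < 10\gamma$ and hence $\theta(\omega_1,\omega_2) \in [3\gamma,12\gamma)$; since $\theta_{12} \le \pi$ gives $\gamma \le \pi/5 < 1$, this $\gamma$ lies in the allowed range. (Also, $\theta_{12}$ ranges in $(0,\pi]$, not $(0,2)$; and your final sentence about ``summing over all $\xi_1,\xi_2$'' is superfluous, since the identity is already pointwise.) With these cosmetic fixes the proof is complete.
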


We omit the straightforward proof. The condition $\theta(\omega_1,\omega_2) \ge 3\gamma$ implies that the minimum angle between vectors in $\Gamma_{\gamma}(\omega_1)$ and $\Gamma_{\gamma}(\omega_2)$ is greater than or equal to $\gamma$, so the sectors are well-separated. If separation is not needed, the following variation may be preferable (again, we skip the easy proof):

\begin{lemma}\label{A:Lemma5} For any $0 < \gamma < 1$ and $k \in \N$,
$$
  \chi_{\theta(\xi_1,\xi_2) \le k\gamma} \lesssim \sum_{\genfrac{}{}{0pt}{1}{\omega_1,\omega_2 \in \Omega(\gamma)}{\theta(\omega_1,\omega_2) \le (k+2)\gamma}}
  \chi_{\Gamma_\gamma(\omega_1)}(\xi_1) \chi_{\Gamma_\gamma(\omega_2)}(\xi_2),
$$
for all $\xi_1,\xi_2 \in \R^2 \setminus \{0\}$.
\end{lemma}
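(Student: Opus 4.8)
The plan is to establish the stated inequality pointwise in $(\xi_1,\xi_2)$ by a direct covering argument at the single scale $\gamma$, the point being that — unlike Lemma~\ref{A:Lemma4} — no Whitney separation between the sectors is demanded, so no dyadic decomposition of $\gamma$ is needed. When $\theta(\xi_1,\xi_2) > k\gamma$ the left side vanishes and there is nothing to prove, so fix $\xi_1,\xi_2 \in \R^2 \setminus \{0\}$ with $\theta(\xi_1,\xi_2) \le k\gamma$; it then suffices to exhibit one admissible pair $(\omega_1,\omega_2)$ for which both characteristic factors on the right equal $1$, which in fact yields the bound with implied constant~$1$.

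First I would invoke the fact, already used for \eqref{A:200}, that a maximal $\gamma$-separated set $\Omega(\gamma) \subset \mathbb S^1$ is a $\gamma$-net: if some direction lay at angle $> \gamma$ from every element of $\Omega(\gamma)$ it could be adjoined, contradicting maximality. Applying this to $\xi_1/\abs{\xi_1}$ produces $\omega_1 \in \Omega(\gamma)$ with $\theta(\xi_1,\omega_1) \le \gamma$, i.e.\ $\xi_1 \in \Gamma_\gamma(\omega_1)$; likewise there is $\omega_2 \in \Omega(\gamma)$ with $\xi_2 \in \Gamma_\gamma(\omega_2)$.

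Next I would check that this pair satisfies the summation constraint $\theta(\omega_1,\omega_2) \le (k+2)\gamma$. Since $\theta(a,b)$ is the geodesic distance on $\mathbb S^1$ between the normalizations of $a$ and $b$, it obeys the triangle inequality, hence
$$
  \theta(\omega_1,\omega_2) \le \theta(\omega_1,\xi_1) + \theta(\xi_1,\xi_2) + \theta(\xi_2,\omega_2) \le \gamma + k\gamma + \gamma = (k+2)\gamma .
$$
(If $(k+2)\gamma \ge \pi$ the constraint is automatically satisfied and any covering pair works.) Thus the term of the sum indexed by $(\omega_1,\omega_2)$ contributes $\chi_{\Gamma_\gamma(\omega_1)}(\xi_1)\,\chi_{\Gamma_\gamma(\omega_2)}(\xi_2) = 1$, so the right-hand side is $\ge 1$, which is exactly the left-hand side. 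This completes the argument.

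There is essentially no obstacle: the whole proof is the triangle inequality for angles combined with the net property of $\Omega(\gamma)$. The only points that deserve an explicit word are the use of that triangle inequality and the degenerate case $(k+2)\gamma \ge \pi$, each disposed of in a line; accordingly I would present the proof in a few sentences, or even omit it as "straightforward" in the same spirit as the adjacent lemmas.
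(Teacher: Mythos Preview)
Your proposal is correct and is exactly the straightforward argument the paper has in mind; indeed the paper omits the proof entirely (``we skip the easy proof''), and your covering-plus-triangle-inequality argument is the natural one-line justification.
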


\section{Proof of the fundamental 2d estimate}\label{C}

Here we prove \eqref{2dMain}. We remark that for the weaker estimate where the $L$-weights in the right hand side are symmetrized [i.e.~we replace $\Lmin^{012}(\Lmax^{12})^{1/2}$ by $(L_1L_2)^{3/4}$], the proof would just be a repetition of the proof of the 3d counterpart \cite[Theorem 1.1]{Selberg:2008c}, but using Lemma \ref{A:Lemma1} instead of its 3d analogue. To get \eqref{2dMain} with the asymmetric $L$-weights, on the other hand, we need to work a little harder.

As remarked already, we may replace the $K^{\pm_j}_{N_j,L_j}$ by $\dot K^{\pm_j}_{N_j,L_j}$. Indeed, if we can prove the latter case, then decomposing the balls $\abs{\xi_j} \lesssim N_j$ into annuli $\abs{\xi_j} \sim M_j$ for dyadic $0 < M_j \le N_j$, it is easy to recover the former case by summing, using also the fact that the two largest of the $M_j$ are comparable. So we shall assume $\supp \widehat{u_j} \subset \dot K^{\pm_j}_{N_j,L_j}$ in \eqref{2dMain}. We may also assume $\Lmax^{12} \ll \Nmin^{012}$, as otherwise \eqref{A:116} is already better than \eqref{2dMain}. Recalling \eqref{A:140}, we split into the cases $N_0 \ll N_1 \sim N_2$ (LHH), $N_1 \lesssim N_0 \sim N_2$ (HLH) and $N_2 \lesssim N_0 \sim N_1$ (HHL), but by symmetry it suffices to consider LHH and HLH.

\subsection{The HLH case: $N_1 \lesssim N_0 \sim N_2$}

By Lemma \ref{B:Lemma1} we reduce to proving that, for any $(\tau_0,\xi_0) \in \R^{1+2}$ with $\abs{\xi_0} \sim N_0$, the set
$$
  E = \dot K^{\pm_1}_{N_1,L_1} \cap \bigl[ (\tau_0,\xi_0) - \dot K^{\pm_2}_{N_2,L_2} \bigr]
$$
verifies $\abs{E} \lesssim N_1^{3/2} \Lmin^{12} (\Lmax^{12})^{1/2}$. Denote the slices $\tau=\text{const}$ by $E(\tau)$, and set $J = \left\{ \tau \in \R \colon E(\tau) \neq \emptyset \right\}$, so by Fubini's theorem, $\abs{E} \le \abs{J} \cdot \sup_{\tau \in J} \abs{E(\tau)}$. But\begin{multline*}
  E \subset \big\{ (\tau,\xi) \in \R^{1+2} \colon \abs{\xi} \sim N_1, \;
  \tau=\pm_1\abs{\xi}+O(L_1),
  \\
  \abs{\xi_0-\xi} \sim N_2,\;
  \tau_0-\tau=\pm_2\abs{\xi_0-\xi}+O(L_2) \big\},
\end{multline*}
so recalling $\Lmax^{12} \ll N_1 \lesssim N_0 \sim N_2$ we see that $$E(\tau) \subset \mathbb S_{L_1}(\abs{\tau}) \cap [\xi_0 + \mathbb S_{L_2}(\abs{\tau_0-\tau})]$$ if $\abs{\tau} \sim N_1$, and otherwise $E(\tau)$ is empty. Hence $\abs{J} \lesssim N_1$, and by Lemma \ref{A:Lemma1},
$$
  \abs{E(\tau)}^2 \lesssim \frac{N_1N_2\left(\Lmin^{12}\right)^2 \Lmax^{12}}{N_0}
  \sim N_1\left(\Lmin^{12}\right)^2 \Lmax^{12},
$$
hence $\abs{E} \le \abs{J} \cdot \sup_{\tau \in J} \abs{E(\tau)} \lesssim N_1 \bigl[ N_1\left(\Lmin^{12}\right)^2 \Lmax^{12} \bigr]^{1/2}$ as needed.

\subsection{The LHH case: $N_0 \ll N_1 \sim N_2$}

We may assume $L_1 \le L_2$ by symmetry, and moreover $L_2 \ll N_0^2/N_1$, since otherwise \eqref{A:116} is better than \eqref{A:110}. So now
$$
  L_1 \le L_2 \ll \frac{N_0^2}{N_1} \ll N_0 \ll N_1 \sim N_2.
$$
The output $\xi_0$ being restricted to a ball $B_0 = \left\{ \xi \in \R^2 \colon \abs{\xi} \lesssim N_0 \right\}$, we may, by a standard tiling argument, restrict the spatial Fourier supports of $u_1$ and $u_2$ to translates $\xi_1'+B_0$ and $\xi_2'+B_0$, where $\xi_1',\xi_2' \in \R^2$ satisfy $\abs{\xi_1'+\xi_2'} \lesssim N_0$ and $\abs{\xi_1'} \sim \abs{\xi_2'} \sim N_1$, and then we have to prove
$$
  \norm{u_1u_2} \lesssim \left(N_0L_1\right)^{1/2} \left(N_1L_2\right)^{1/4} \norm{u_1} \norm{u_2}.
$$
Writing $\theta_{12} = \theta(\pm_1\xi_1,\pm_2\xi_2)$, we split into the cases $\theta_{12} \lesssim \gamma_0$ and $\theta_{12} \gg \gamma_0$, where
$$
  \gamma_0 = \left(\frac{L_2}{N_1}\right)^{1/2}
$$
is the critical angle which makes an angular sector of the $\widehat{u_2}$-support essentially flat (the condition being $N_1 \gamma_0^2 \sim L_2$). Assuming without loss of generality that $\widehat u_1, \widehat u_2 \ge 0$, we then have, by Lemmas \ref{A:Lemma4} and \ref{A:Lemma5},
\begin{align*}
  \norm{u_1u_2} &\lesssim \sum_{\omega_1,\omega_2 \in \Omega(\gamma_0)} \chi_{\theta(\omega_1,\omega_2) \lesssim \gamma_0} \norm{u_1^{\gamma_0,\omega_1}u_2^{\gamma_0,\omega_2}}
  \\
  & \qquad +
  \sum_{\gamma_0 < \gamma < 1} \sum_{\omega_1,\omega_2 \in \Omega(\gamma)} \chi_{3\gamma \le \theta(\omega_1,\omega_2) \le 12\gamma} \norm{u_1^{\gamma,\omega_1}u_2^{\gamma,\omega_2}}
  =: \Sigma_1 + \Sigma_2,
\end{align*}
where $\gamma$ is understood to be dyadic.

The volume of the support of $\mathcal F u_1^{\gamma_0,\omega_1}$ is $O(N_0 N_1\gamma_0 L_1) \sim O(N_0 \sqrt{N_1L_2} L_1)$, so Lemma \ref{B:Lemma1} gives
$$
  \norm{u_1^{\gamma_0,\omega_1}u_2^{\gamma_0,\omega_2}}
  \lesssim
  \left(N_0L_1\right)^{1/2} \left(N_1L_2\right)^{1/4}
  \norm{u_1^{\gamma_0,\omega_1}}
  \norm{u_2^{\gamma_0,\omega_2}},
$$
hence
$$
  \Sigma_1
  \lesssim
  \left(N_0L_1\right)^{1/2} \left(N_1L_2\right)^{1/4}
  \sum_{\omega_1,\omega_2 \in \Omega(\gamma_0)} \chi_{\theta(\omega_1,\omega_2) \lesssim \gamma_0}
  \norm{u_1^{\gamma_0,\omega_1}}
  \norm{u_2^{\gamma_0,\omega_2}},
$$
so applying \eqref{A:208} we get the desired estimate.

Now consider $\Sigma_2$. Since the sectors $\Gamma_{\gamma}(\omega_1)$ and $\Gamma_{\gamma}(\omega_2)$ are separated by an angle comparable to $\gamma$, we get from Lemma \ref{B:Lemma1} (see the next subsection for the details) that
\begin{equation}\label{C:200}
  \norm{u_1^{\gamma,\omega_1}u_2^{\gamma,\omega_2}}
  \lesssim
  \left( N_0 \frac{L_1L_2}{\gamma} \right)^{1/2}
  \norm{u_1^{\gamma,\omega_1}}
  \norm{u_2^{\gamma,\omega_2}},
\end{equation}
hence
\begin{align*}
  \Sigma_2
  &\lesssim
  \sum_{\gamma_0 < \gamma < 1}  \left( N_0 \frac{L_1L_2}{\gamma} \right)^{1/2}
  \sum_{\omega_1,\omega_2 \in \Omega(\gamma)}
  \norm{u_1^{\gamma,\omega_1}}
  \norm{u_2^{\gamma,\omega_2}}
  \\
  &\lesssim
  \left( N_0 \frac{L_1L_2}{\gamma_0} \right)^{1/2}
  \norm{u_1}
  \norm{u_2}
  \sim
  \left(N_0L_1\right)^{1/2} \left(N_1L_2\right)^{1/4}
  \norm{u_1}
  \norm{u_2},
\end{align*}
where we used \eqref{A:208} and $\sum_{\gamma_0 < \gamma < 1} \gamma^{-1/2} \sim \gamma_0^{-1/2}$.

\subsection{Proof of \eqref{C:200}} By Lemma \ref{B:Lemma1} we reduce to proving that $\abs{E} \lesssim N_0 L_1 L_2 / \gamma$ uniformly in $(\tau_0,\xi_0) \in \R^{1+2}$, where
\begin{equation}\label{K:12}
  E = \left\{ (\tau,\xi) \colon \xi \in R,\;\;
  \tau=\pm_1\abs{\xi}+O(L_1),\;\;
  \tau_0-\tau=\pm_2\abs{\xi_0-\xi}+O(L_2) \right\}
\end{equation}
and
$$
  R = \left\{ \xi \in \R^2 \colon \pm_1\xi \in \Gamma_{\gamma}(\omega_1), \;\;
  \pm_2(\xi_0-\xi) \in \Gamma_{\gamma}(\omega_2),\;\;
  \abs{\xi-\xi_1'} \lesssim N_0 \right\}.
$$
Integrating first in the $\tau$-direction, we get by Fubini's theorem:
\begin{equation}\label{K:16}
  \abs{E}
  \lesssim \Lmin^{12} \abs{ \left\{ \xi \in R \colon  \;\; f(\xi)=\tau_0+O(\Lmax^{12}) \right\} },
\end{equation}
where
\begin{equation}\label{K:20}
  f(\xi) = \pm_1\abs{\xi} \pm_2\abs{\xi-\xi_0}.
\end{equation}
Then $\nabla f(\xi) = e_1-e_2$, where
\begin{equation}\label{K:24}
  e_1 = \pm_1\frac{\xi}{\abs{\xi}},
  \qquad
  e_2 = \pm_2\frac{\xi_0-\xi}{\abs{\xi_0-\xi}}
\end{equation}
satisfy $\theta(e_1,\omega_1) \le \gamma$ and $\theta(e_2,\omega_2) \le \gamma$ for $\xi \in R$. Choosing coordinates $\xi=(\xi^1,\xi^2)$ so that $\frac{\omega_1-\omega_2}{\abs{\omega_1-\omega_2}} = (1,0)$ we have, for all $\xi \in R$,
\begin{align*}
  \partial_1 f(\xi)
  &= \frac{\cos\theta(e_1,\omega_1)+\cos\theta(e_2,\omega_2)-\cos\theta(e_1,\omega_2)-\cos\theta(e_2,\omega_1)}{\abs{\omega_1-\omega_2}}
  \\
  &\ge
  \frac{2\cos\gamma-2\cos2\gamma}{\abs{\omega_1-\omega_2}}
  \ge
  \frac{\gamma^2}{\abs{\omega_1-\omega_2}} \sim \gamma
\end{align*}
where we used the assumption $\theta(\omega_1,\omega_2) \ge 3\gamma$, which implies $\theta(e_1,\omega_2) \ge 2\gamma$, $\theta(e_2,\omega_1) \ge 2\gamma$ and $\abs{\omega_1 - \omega_2} \sim \gamma$. Thus, integrating next in the $x$-direction,
$$
  \abs{E} \lesssim \Lmin^{12} \frac{\Lmax^{12}}{\gamma} \abs{ \left\{ \xi^2 \colon \abs{\xi^2-b} \lesssim N_0  \right\} }
  \lesssim \Lmin^{12} \frac{\Lmax^{12}}{\gamma} N_0
$$
as desired, where $b$ denotes the second coordinate of $\xi_1'$.

\section{Proof of the anisotropic estimate}\label{K}

Here we prove Theorem \ref{Z:Thm1}. By tiling and duality it suffices to prove, given any $\delta >0$ and intervals $I_1,I_2 \subset \R$ with $\abs{I_1} = \abs{I_2} = \delta$, that
\begin{equation}\label{K:3:1}
  \iint \overline{u_0}\, u_1^{I_1} u_2^{I_2} \, dt \, dx
  \lesssim \left[ \frac{\delta(\Nmin^{12})^{1/2} (L_1L_2)^{3/4}}{\alpha} \right]^{1/2}
  \bignorm{u_0}
  \bignorm{u_1^{I_1}}
  \bignorm{u_2^{I_2}},
\end{equation}
where $u_j^{I_j} = \Proj_{\xi_j \cdot \omega \in I_j} u_j$ and we may assume $\widehat{u_j} \ge 0$ for $j=0,1,2$. Splitting the support of $\widehat{u_1}$ into two symmetric parts, and replacing $\alpha$ by $2\alpha$, we may assume
$$
  \supp \widehat{u_1} \subset A_0 \equiv \left\{ (\tau,\xi) \colon
  \theta(\pm_1\xi,\omega) \le \frac{\pi}{2} - 2\alpha \right\}.
$$
Next, split the support of $\widehat{u_2}$ into three parts, by intersecting with
\begin{align*}
  A_1 &= \left\{ (\tau,\xi) \colon
  \theta(\xi,\omega^\perp) \le \alpha \right\},
  \\
  A_2 &= \left\{ (\tau,\xi) \colon
  \theta(\pm_2\xi,\omega) \le \frac{\pi}{2} - \alpha \right\},
  \\
  A_3 &= \left\{ (\tau,\xi) \colon
  \theta(\pm_2\xi,-\omega) \le \frac{\pi}{2} - \alpha \right\}.
\end{align*}
Correspondingly we split the proof into three cases.

\subsection{The case $\supp \widehat{u_2} \subset A_1$}\label{K:8}

By Lemma \ref{B:Lemma1} we reduce to proving the volume bound $\abs{E} \lesssim \alpha^{-1}\delta(\Nmin^{12})^{1/2} (L_1L_2)^{3/4}$ uniformly in $(\tau_0,\xi_0) \in \R^{1+2}$, where
$$
  E = \left\{ (\tau,\xi) : \xi \cdot \omega \in I_1 \right\} \cap \dot K^{\pm_1}_{N_1,L_1} 
  \cap A_0 \cap \left[ (\tau_0,\xi_0) - A_1 \cap \dot K^{\pm_2}_{N_2,L_2} \right].
$$
This is of the form \eqref{K:12} with
$$
  R \subset \Bigl\{ \xi \colon  \; \abs{\xi} \sim N_1,
  \;
  \abs{\xi_0-\xi} \sim N_2,
  \;
  \xi \cdot \omega \in I_1,
  \;
  \theta(e_1,\omega^\perp) \ge 2\alpha,
  \;
  \theta(e_2,\omega^\perp) \le \alpha \Bigr\},
$$
where $e_1,e_2$ are defined as in \eqref{K:24}. So now \eqref{K:16} holds, with $f$ as in \eqref{K:20}. Choose coordinates $\xi=(\xi^1,\xi^2)$ so that $\omega = (1,0)$. Then for all $\xi \in R$, recalling that $\nabla f(\xi) = e_1 - e_2$, we have
$$
  \abs{\partial_2 f(\xi)}
  = \cos\theta(e_2,\omega^\perp) - \cos\theta(e_1,\omega^\perp)
  \ge \cos \alpha - \cos 2\alpha
  \sim \alpha^2,
$$
so integrating next in the $\xi^2$-direction we get
$$
  \abs{E} \lesssim \Lmin^{12} \frac{\Lmax^{12}}{\alpha^2} \abs{ \left\{ \xi^1 \colon \xi^1 \in I_1  \right\} }
  \lesssim \Lmin^{12} \frac{\Lmax^{12}}{\alpha^2} \, \delta,
$$
proving the desired estimate if $\alpha \gtrsim \gamma_0$, where
$$
  \gamma_0 = \sqrt{\frac{\Lmax^{12}}{\Nmin^{12}}}.
$$
If $\alpha \lesssim \gamma_0$, we use $\abs{E} \lesssim \delta \Nmin^{12} \Lmin^{12}$, which holds since $\abs{R} \lesssim \delta \Nmin^{12}$.

\subsection{The case $\supp \widehat{u_2} \subset A_2$}\label{K:38}

By Lemma \ref{A:Lemma4},
$$
  \text{l.h.s.}\eqref{K:3:1}
  \sim
  \sum_{\gamma} \sum_{\omega_1,\omega_2}
  \iint
  \overline{u_0} \, u_1^{I_1;\gamma,\omega_1}
  u_2^{I_2;\gamma,\omega_2} \, dt \, dx,
$$
where $0 < \gamma < 1$ is dyadic and $\omega_1,\omega_2 \in \Omega(\gamma)$ satisfy $3\gamma \le \theta(\omega_1,\omega_2) \le 12\gamma$. Depending on whether $\gamma \ge \gamma_0$ or $0 < \gamma < \gamma_0$ we then use, respectively,
\begin{align}\label{K:50}
  \bignorm{u_1^{I_1;\gamma,\omega_2}
  u_2^{I_2;\gamma,\omega_2}}
  &\lesssim \left( \frac{\delta L_1 L_2}{\gamma\alpha} \right)^{1/2}
  \bignorm{u_1^{I_1;\gamma,\omega_1}}
  \bignorm{u_2^{I_2;\gamma,\omega_2}},
  \\
  \label{K:52}
  \bignorm{u_1^{I_1;\gamma,\omega_2}
  u_2^{I_2;\gamma,\omega_2}}
  &\lesssim \left( \frac{\delta \Nmin^{12}\gamma \Lmin^{12}}{\alpha} \right)^{1/2}
  \bignorm{u_1^{I_1;\gamma,\omega_1}}
  \bignorm{u_2^{I_2;\gamma,\omega_2}},
\end{align}
and summing $\omega_1,\omega_2$ as in \eqref{A:208}, we then obtain \eqref{K:3:1}. This concludes the case $\supp\widehat{u_2} \subset A_2$, up to the claimed estimates \eqref{K:50} and \eqref{K:52}, which we now prove.

We reduce \eqref{K:50} to $\abs{E} \lesssim \delta L_1 L_2/(\gamma\alpha)$ for $E$ as in \eqref{K:12}, with
$$
  R = \Bigl\{ \xi \colon \xi \cdot \omega \in I_1,
  \;
  \theta(e_1,e_2) \ge \gamma,
  \\
  \theta(e_1,\omega) \le \frac{\pi}{2} - \alpha,
  \;
  \theta(e_2,\omega) \le \frac{\pi}{2} - \alpha \Bigr\}.
$$
Choosing coordinates so that $\omega = (1,0)$ we then have, for all $\xi \in R$,
$$
  \abs{\partial_2 f(\xi)}
  = \abs{ \cos\theta(e_1,\omega^\perp) - \cos\theta(e_2,\omega^\perp)}
  \ge \cos \alpha - \cos (\alpha+\gamma)
  \sim \gamma\alpha.
$$
Integrating next in the $\xi^2$-direction, we therefore get
$$
  \abs{E} \lesssim \Lmin^{12} \frac{\Lmax^{12}}{\gamma\alpha} \abs{ \left\{ \xi^1 \colon \xi^1 \in I_1  \right\} }
  \lesssim \frac{L_1L_2}{\gamma\alpha} \delta
$$
as desired.

For \eqref{K:52} we need, assuming $N_1 \le N_2$ by symmetry, $\abs{E} \lesssim \delta N_1\gamma \Lmin^{12}/\alpha$ with
$$
  R = \left\{ \xi \colon  \; \abs{\xi} \sim N_1,
  \;
  \xi \cdot \omega \in I_1,
  \;
  \theta(e_1,\omega_1) \le \gamma,
  \; 
  \theta(e_1,\omega^\perp) \ge \alpha
  \right\}.
$$
Since $\abs{E} \lesssim \Lmin^{12} \abs{R}$, it suffices to show $ \abs{R} \lesssim \delta N_1\gamma/\alpha$, but this is easy; we omit the details.

\subsection{The case $\supp \widehat{u_2} \subset A_3$}\label{K:298}

Assume $\abs{\xi_1} \le \abs{\xi_2}$ by symmetry, hence $N_1 \lesssim N_2$. In fact we may assume $N_1 \ll N_2$, since if $N_1 \sim N_2$ we can apply the bilinear trick
$$
  \bignorm{u_1^{I_1} u_2^{I_2}}
  \le
  \bignorm{u_1^{I_1}}_{L^4} \bignorm{u_2^{I_2}}_{L^4}
  =
  \bignorm{u_1^{I_1} u_1^{I_1}}^{1/2} \bignorm{u_2^{I_2} u_2^{I_2}}^{1/2}
$$
to effectively reduce to the case $\supp \widehat{u_2} \subset A_2$.

We need $\abs{E} \lesssim \alpha^{-1}\delta(\Nmin^{12})^{1/2} (L_1L_2)^{3/4}$ with
$$
  R = \Bigl\{ \xi \colon  \; \abs{\xi} \sim N_1,\;
  \abs{\xi_0-\xi} \sim N_2,
  \;
  \xi \cdot \omega \in I_1, \;
  \theta(e_1,e_2) \ge 2\alpha \Bigr\}.
$$
Since $N_1 \ll N_2$, we must have $\abs{\xi_0} \sim N_2$ (otherwise $R$ would be empty). Choose coordinates so that $\omega = (1,0)$. Then $\partial_1 f(\xi) \gtrsim \alpha$ for $\xi \in R$, so using \eqref{K:16} we get
\begin{equation}\label{K:324}
  \abs{E} \lesssim \Lmin^{12} \min\left(\delta,\frac{\Lmax^{12}}{\alpha}\right)
  \abs{ \boldsymbol{\pi}_{\omega^\perp}(S \cap \{ \xi \colon \xi^1 \in I \} )},
\end{equation}
where
$$
  S = \left\{ \xi \colon \; \abs{\xi} \lesssim N_1, \; \theta_{12} \ge 2\alpha, \; f(\xi) = \tau_0 + O\bigl( \Lmax^{12} \bigr) \right\}.
$$
Here we write $\theta_{12} = \theta(e_1,e_2)$, $\boldsymbol{\pi}_{\omega^\perp}$ denotes the orthogonal projection onto $\omega^\perp$, and $f$ is given by \eqref{K:20}, so in particular $\nabla f(\xi) = e_1 - e_2$.

On the other hand we obviously have $\abs{E} \lesssim \Lmin^{12} \abs{R} \lesssim \Lmin^{12} \delta N_1$, and this implies the desired estimate if $\alpha \lesssim (\Lmax^{12}/N_1)^{1/2}$, hence we may assume
$$
  \Lmax^{12} \ll N_1\alpha^2,
$$
and since $\theta_{12} \ge 2\alpha$ it then follows by Lemma \ref{I:Lemma1} that
\begin{equation}\label{K:328}
  \Lmax^{12}
  \ll
  \bigabs{\abs{\tau_0} - \abs{\xi_0}} \sim
  N_1\theta_{12}^2.
\end{equation}
Note that
$$
  S \subset \left\{ \xi \colon \abs{\xi} \sim N_1, \; \theta_{12} \ge 2\alpha \right\} \cap 
  \bigcup_{\tau_1 < \tau < \tau_2} S(\tau),
$$
where $\tau_1 < \tau_0 < \tau_2$ with $\tau_2-\tau_1 \sim \Lmax^{12}$ and
$$
  S(\tau) = \left\{ \xi \colon  \abs{\xi} \le \abs{\xi_0-\xi}, \; f(\xi) = \tau \right\}
$$
is the left half of an ellipse [resp.~the left branch of a hyperbola] if $\pm_1=\pm_2$ [resp.~$\pm_1\neq\pm_2$] with foci at $0$ and $\xi_0$ and center at $\xi_0/2$. Let us now switch to coordinates $\xi=(\xi^1,\xi^2)$ so that $\xi_0 = (\abs{\xi_0},0)$. Note that the condition $\abs{\xi} \le \abs{\xi_0-\xi}$ means that $\xi$ lies in the halfplane to the left of the vertical line $\xi^1 = \abs{\xi_0}/2$ through the midpoint of the foci. The family of curves $S(\tau)$ foliates this halfplane.

We claim (this is proved below) that if we start at a point $\xi$ in the halfplane $\abs{\xi} \le \abs{\xi_0-\xi}$ and follow the integral curve of $\nabla f$ out from this point, the angle $\theta_{12}$ will only increase as long as the integral curve stays in the halfplane. Note also that $\abs{\nabla f} = \abs{e_1-e_2} \sim \theta_{12}$. Therefore, if we start at a point $\xi \in S(\tau)$ with $\tau \in (\tau_1,\tau_2)$, and where $\theta_{12} \ge 2\alpha$, then along the integral curve of $\nabla f$ starting from this point we will have $\abs{\nabla f} \gtrsim \alpha$ until we hit the outer boundary $S(\tau_2)$. Therefore we will hit the outer boundary after moving a distance $d \lesssim \Lmax^{12}/\alpha \ll N_1$. From this we conclude that $S$ is contained in an $\varepsilon$-neighborhood, where $\varepsilon \sim \Lmax^{12}/\alpha$, of the convex curve
$$
  C_0 = \left\{ \xi \colon \abs{\xi} \sim N_1, \; \theta_{12} \ge 2\alpha \right\} \cap S(\tau_2).
$$
Moreover, the neighborhood is tubular since, by the second inequality in \eqref{Curvature} below, the minimal radius of curvature $R_*$ of $C_0$ satisfies $R_* \gtrsim N_1 \gg \varepsilon$, where the last inequality holds since $\Lmax^{12} \ll N_1\alpha^2$.

Next we claim that the curvature $\kappa$ of $C_0$ satisfies
\begin{equation}\label{Curvature}
  \frac{\alpha}{N_1} \lesssim \kappa \lesssim \frac{1}{N_1}.
\end{equation}
Granting this claim for the moment, we conclude that the maximal radius of curvature $R^*$ of $C_0$ satisfies $R^* \lesssim N_1/\alpha$. Then it follows that the intersection between a tubular $\varepsilon$-neighborhood of the convex curve $C_0$ and the $\delta$-thickened line $\{ \xi \colon \xi^1 \in I \}$ has a diameter no larger than $\sqrt{R^* \max(\delta,\varepsilon)}$, so from \eqref{K:324} we get
$$
  \abs{E} \lesssim \Lmin^{12} \min\left(\delta,\frac{\Lmax^{12}}{\alpha}\right)
  \sqrt{\frac{N_1}{\alpha} \max\left(\delta,\frac{\Lmax^{12}}{\alpha}\right)}
  \le \frac{\delta(\Nmin^{12})^{1/2} (L_1L_2)^{3/4}}{\alpha},
$$
as desired.

It remains to prove the two claims made above. Without loss of generality we assume $\pm_1 = +$, so that $f(\xi) = \abs{\xi} \pm \abs{\xi_0-\xi}$, where the plus sign gives an ellipse and the minus sign a hyperbola.

\subsubsection{Proof of \eqref{Curvature}} Assuming that $S$ is nonempty, then in view of \eqref{K:328} we have that, for any $\tau \in (\tau_1,\tau_2)$, the major and minor semiaxes $a$ and $b$ of $S(\tau)$ satisfy
$$
  2a = \abs{\tau} = \bigabs{\abs{\xi} \pm \abs{\xi_0-\xi}} \sim N_2,
  \qquad
  2b = \bigabs{\tau^2 - \abs{\xi_0}^2}^{1/2} \sim (N_1N_2)^{1/2} \theta_{12}.
$$
Parametrize $S(\tau)$ by $\xi^1=r\cos\theta$, $\xi^2=r\sin\theta$, where $\theta=\theta(\xi,\xi_0)$ and
$$
  r= \frac{b^2}{a \mp c\cos\theta},
$$
where $c=\abs{\xi_0}/2 \sim N_2$. Then (here the dots denote a $\theta$-derivatives)
$$
  \kappa = \frac{\abs{r^2 + 2\dot r^2 - r\ddot r}}{(r^2+\dot r^2)^{3/2}}.
$$
Since $r = \abs{\xi} \sim N_1$ and $b^2 \sim N_1N_2\theta_{12}^2$, we conclude that $a \mp c\cos\theta \sim N_2\theta_{12}^2$. We then calculate
$$
  \dot r = \mp \frac{b^2}{(a \mp c\cos\theta)^2} c\sin\theta
$$
and
$$
  \ddot r = \mp \frac{b^2}{(a \mp c\cos\theta)^2} c\cos\theta
  + \frac{b^2}{(a \mp c\cos\theta)^3} 2(c\sin\theta)^2,
$$
hence
$$
  r^2 + 2\dot r^2 - r\ddot r
  =
  \frac{a b^4}{(a \mp c\cos\theta)^3}
  \sim
  \frac{N_1^2}{\theta_{12}^2}.
$$
Moreover,
$$
  \abs{\dot r}
  \sim
  \frac{b^2}{(a \mp c\cos\theta)^2} N_2\sin\theta_{12}
  \lesssim
  \frac{N_1N_2\theta_{12}^2}{(N_2\theta_{12}^2)^2}N_2\theta_{12}
  \sim \frac{N_1}{\theta_{12}},
$$
where we used the fact that $\abs{\xi_0} \sin \theta = \abs{\xi_2} \sin\theta_{12}$. Plugging these facts into the formula for $\kappa$ above, and recalling that $\theta_{12} \ge \alpha$, we get the left inequality in\eqref{Curvature}. To prove the right inequality we split into the cases $\theta_{12} \ll 1$ and $\theta_{12} \sim 1$. If $\theta_{12} \ll 1$ we have (since $\sin\theta_{12} \sim \theta_{12}$) $\abs{\dot r} \sim N_1/\theta_{12} \gg r \sim N_1$, hence $\kappa \sim \theta_{12}/N_1$. If $\theta_{12} \sim 1$, on the other hand, then we estimate $\kappa \lesssim N_1^2/r^3 \sim 1/N_1$.

\subsubsection{Proof of the claim about integral curves}

The claim is that if we start at a point $\xi$ in the halfplane $\abs{\xi} \le \abs{\xi_0-\xi}$ and follow the integral curve of $\nabla f$ out from this point, the angle $\theta_{12}$ will only increase as long as the integral curve stays in the halfplane. But $\abs{\nabla f}^2 = \abs{e_1-e_2}^2 = 2(1-\cos\theta_{12})$, so what we have to prove is that $\abs{\nabla f}^2$ increases along the integral curves of $\nabla f$, as long as $\abs{\xi} \le \abs{\xi_0-\xi}$. Thus, it is enough to check that $\nabla\left( \abs{\nabla f}^2 \right) \cdot \nabla f > 0$ for $\abs{\xi} < \abs{\xi_0-\xi}$. But a direct calculation reveals that
$$
  \nabla\left( \abs{\nabla f}^2 \right) \cdot \nabla f
  =
  (\xi^2)^2
  \frac{\abs{\xi_0-\xi} \pm \abs{\xi}}{\abs{\xi_0-\xi}^3\abs{\xi}^3},
$$
so the desired positivity is clear.

\section{Proof of the null form estimate}\label{E}

Here we prove Theorem \ref{N:Thm2}. By duality, write the estimate as
\begin{equation}\label{E:24}
  \iint \overline{u_0}\, \nullform(\Proj_{\R \times T_r(\omega)} u_1,u_2) \, dt \, dx
  \lesssim \left( r L_1 L_2 \right)^{1/2}
  \norm{u_0}
  \norm{u_1}
  \norm{u_2},
\end{equation}
where $\widehat{u_j} \subset \dot K^{\pm_j}_{N_j,L_j}$ for $j=1,2$, and without loss of generality $\widehat{u_j} \ge 0$ for $j=0,1,2$. By Lemma \ref{A:Lemma4},
\begin{equation}\label{E:18}
  \text{l.h.s.}\eqref{E:24}
  \sim
  \sum_{\gamma} \sum_{\omega_1,\omega_2}
  \gamma
  \iint
  \overline{u_0}
  \left( \Proj_{\R \times T_r(\omega)} u_1^{\gamma,\omega_1} \right)
  u_2^{\gamma,\omega_2} \, dt \, dx,
\end{equation}
where $0 < \gamma < 1$ is dyadic and $\omega_1,\omega_2 \in \Omega(\gamma)$ with $3\gamma \le \theta(\omega_1,\omega_2) \le 12\gamma$. We claim that the following hold (these are proved below):
\begin{align}
  \label{E:20}
  \norm{\Proj_{\R \times T_r(\omega)} u_1^{\gamma,\omega_1}
  \cdot u_2^{\gamma,\omega_2}}
  &\lesssim \left( \frac{r L_1 L_2}{\gamma^2} \right)^{1/2}
  \norm{u_1^{\gamma,\omega_1}}
  \norm{u_2^{\gamma,\omega_2}},
  \\
  \label{E:21}
  \norm{\Proj_{\R \times T_r(\omega)} u_1^{\gamma,\omega_1}
  \cdot u_2^{\gamma,\omega_2}}
  &\lesssim \left( \frac{\Nmin^{12} L_1 L_2}{\gamma}\right)^{1/2}
  \norm{u_1^{\gamma,\omega_1}}
  \norm{u_2^{\gamma,\omega_2}}
  \\
  \label{E:22}
  \norm{\Proj_{\R \times T_r(\omega)} u_1^{\gamma,\omega_1}
  \cdot u_2^{\gamma,\omega_2}}
  &\lesssim \left( r \Nmin^{12} \Lmin^{12} \right)^{1/2}
  \norm{u_1^{\gamma,\omega_1}}
  \norm{u_2^{\gamma,\omega_2}}.
\end{align}

In the case
$$
  0 < \gamma \lesssim \gamma_0 \equiv \max \left( 
  \left(\frac{\Lmax^{12}}{\Nmin^{12}}\right)^{1/2},
  \frac{r}{\Nmin^{12}} \right)
$$
we apply \eqref{E:21} if $r > (\Nmin^{12}\Lmax^{12})^{1/2}$, \eqref{E:22} otherwise, and we sum $\omega_1,\omega_2$ as in~\eqref{A:208}. This gives the desired estimate since $\sum_{0 < \gamma \lesssim \gamma_0} \gamma^p \sim \gamma_0^p$ for $p > 0$.

It then remains to consider the case
$$
  \gamma_0 \ll \gamma < 1.
$$
Now we use \eqref{E:20}, but to avoid a logarithmic loss when summing $\gamma$, we need to exploit some orthogonality in the bilinear interaction. To begin with, observe that since $\xi_1 \in T_r(\omega)$ and $\abs{\xi_1} \sim N_1$, we may assume (replacing $\omega$ by $-\omega$ if necessary) that
$\theta(\pm_1\xi_1,\omega) \lesssim r/N_1 \ll \gamma$, where the last inequality is due to $\gamma_0 \ll \gamma$. Moreover, $\theta(\pm_1\xi_1,\omega_1) \le \gamma$, hence $\theta(\omega_1,\omega) \le 3\gamma/2$, implying that $\omega_1 \in \Omega(\gamma)$ is essentially uniquely determined, hence so is $\omega_2$.

Since $\gamma_0 \ll \gamma$, we have $\Nmin^{12}\gamma^2 \gg \Lmax^{12}$, and $3\gamma \le \theta(\omega_1,\omega_2) \le 12\gamma$ implies $\theta_{12} \sim \gamma$, hence Lemma \ref{I:Lemma1} gives
$$
  \abs{\hypwt_0} \equiv \bigabs{\abs{\tau_0}-\abs{\xi_0}} \sim
  \left\{
  \begin{alignedat}{2}
  &\Nmin^{12}\gamma^2& \quad &\text{if $\pm_1=\pm_2$},
  \\
  &\frac{N_1N_2\gamma^2}{\abs{\xi_0}}& \quad &\text{if $\pm_1\neq\pm_2$}.
  \end{alignedat}
  \right.
$$
In the case $\pm_1=\pm_2$ we can therefore estimate the sum in \eqref{E:18} by an absolute constant times
$$
  \sum_{\gamma} \sum_{\omega_1,\omega_2}
  \left(rL_1L_2\right)^{1/2}
  \bignorm{\Proj_{\abs{\hypwt_0} \sim \Nmin^{12}\gamma^2} u_0}
  \bignorm{u_1^{\gamma,\omega_1}}
  \bignorm{u_2^{\gamma,\omega_2}}
  \le
  \left(rL_1L_2\right)^{1/2} A B,
$$
where
\begin{align*}
  A^2 &=
  \sum_{\gamma} \sum_{\omega_1,\omega_2}
  \bignorm{\Proj_{\abs{\hypwt_0} \sim \Nmin^{12}\gamma^2} u_0}^2
  \sim
  \sum_{\gamma} \bignorm{\Proj_{\abs{\hypwt_0} \sim \Nmin^{12}\gamma^2} u_0}^2
  \sim \norm{u_0}^2,
  \\
  B^2 &= \sum_{\gamma} \sum_{\omega_1,\omega_2}
  \bignorm{u_1^{\gamma,\omega_1}}^2
  \bignorm{u_2^{\gamma,\omega_2}}^2
  \sim \bignorm{u_1}^2 \bignorm{u_2}^2.
\end{align*}
Here we used the observation that $\omega_1,\omega_2$ are essentially uniquely determined to get the estimate for $A^2$, and we used Lemma \ref{A:Lemma4} to get the estimate for $B^2$. The case $\pm_1 \neq \pm_2$ works out the same way except that we use $\abs{\hypwt_0} \sim \frac{N_1 N_2\gamma^2}{\abs{\xi_0}}$. 

\subsection{Proof of \eqref{E:20}--\eqref{E:22}}\label{E:38:2} We need to bound $\abs{E}$ by an absolute constant times, respectively, $rL_1L_2/\gamma^2$, $\Nmin^{12} L_1L_2/\gamma$ and $r\Nmin^{12}\Lmin^{12}$, where $E$ satisfies \eqref{K:12} with
$$
  R = \bigl\{ \xi \in T_r(\omega) \colon  \abs{\xi} \sim N_1, \;
  \abs{\xi_0-\xi} \sim N_2, \; \theta(e_1,\omega_1) \le \gamma, \;
  \; \theta(e_2,\omega_2) \le \gamma \bigr\}
$$
and $e_1,e_2$ are defined as in \eqref{K:24}. Then \eqref{K:16} holds with $f$ as in \eqref{K:20}. Clearly, $\abs{R} \lesssim r\Nmin^{12}$, proving \eqref{E:22}. The estimate $\abs{E} \lesssim \Nmin^{12} L_1L_2/\gamma$ follows as in the proof of \eqref{C:200}, and this proves \eqref{E:21}. So it remains to prove \eqref{E:20}. Let $\xi \in R$. We may assume $r \ll N_1\gamma$ (otherwise \eqref{E:21} is better), hence (replacing $\omega$ by $-\omega$ if necessary) $\theta(e_1,\omega) \lesssim r/N_1 \ll \gamma$, and since $\theta(e_1,e_2) \ge \gamma$ it follows that $\theta(e_2,\omega) \ge \frac12 \gamma$. Thus,
$$
  \nabla f(\xi) \cdot \omega = (e_1-e_2)\cdot\omega
  = \cos \theta(e_1,\omega) - \cos \theta(e_2,\omega)
  \ge \cos\frac14\gamma-\cos\frac12 \gamma \sim \gamma^2,
$$
so from \eqref{K:16} we conclude that
$$
  \abs{E} \lesssim \Lmin^{12} \frac{\Lmax^{12}}{\gamma^2}
  \bigabs{\boldsymbol{\pi}_{\omega^\perp}(T_r(\omega))}
  \lesssim \Lmin^{12} \frac{\Lmax^{12}}{\gamma^2}
  r,
$$
where $\boldsymbol{\pi}_{\omega^\perp}$ is the projection onto $\omega_1^\perp$. This proves \eqref{E:20}.

\section{Proof of the concentration/nonconcentration estimate}\label{J}

Here we prove Theorem \ref{N:Thm4}. By duality and Lemma \ref{A:Lemma4} we reduce to
\begin{multline}\label{J:190}
  \sum_{\gamma} \sum_{\omega_1,\omega_2}
  \gamma
  \iint
  \overline{u_0}
  \, \Proj_{\xi_0 \cdot \omega \in I_0}
  \left( \Proj_{\R \times T_r(\omega)} u_1^{\gamma,\omega_1} \right)
  u_2^{\gamma,\omega_2} \, dt \, dx
  \\
  \lesssim
  \left(rL_1L_2\right)^{1/2}
  \norm{u_0}
  \left(\sup_{I_1} \norm{\Proj_{\xi_1 \cdot \omega \in I_1} u_1}\right)
  \norm{u_2}
\end{multline}
where $0 < \gamma \ll 1$ is dyadic  and $\omega_1,\omega_2 \in \Omega(\gamma)$ satisfy $3\gamma \le \theta(\omega_1,\omega_2) \le 12\gamma$. Since $L_1,L_2$ appear to the power $1/2$ in the right hand side, we can assume that they are arbitrarily small, by dividing the thickened cones into thinner cones and summing the resulting estimates using Cauchy-Schwarz (see Remark 4.1 in \cite{Selberg:2008c} for more details). In particular, we can assume $L_2 \ll N_2$, which will be needed at a certain point later on.

Define $\gamma_0$ as in the previous section. For $0 < \gamma \lesssim \gamma_0$, we argue as in the proof of Theorem \ref{N:Thm2}, but instead of \eqref{E:21} and \eqref{E:22} we use the estimates (proved below)
\begin{align}
  \label{J:200}
  \norm{\Proj_{\xi_0 \cdot \omega \in I_0} \left(
  \Proj_{\R \times T_r(\omega)} u_1^{\gamma,\omega_1} \cdot u_2^{\gamma,\omega_2} \right)}
  &\lesssim \left( r \abs{I_0} \Lmin^{12} \right)^{1/2} 
  \norm{u_1^{\gamma,\omega_1}}
  \norm{u_2^{\gamma,\omega_2}},
  \\
  \label{J:202}
  \norm{\Proj_{\xi_0 \cdot \omega \in I_0} \left(
  \Proj_{\R \times T_r(\omega)} u_1^{\gamma,\omega_1} \cdot u_2^{\gamma,\omega_2} \right)}
  &\lesssim \left( \frac{\abs{I_0} L_1 L_2}{\gamma} \right)^{1/2} 
  \norm{u_1^{\gamma,\omega_1}}
  \norm{u_2^{\gamma,\omega_2}}.
\end{align}
If we also tile by the condition $\xi_0 \cdot \omega \in I_0$, then we see that the part of \eqref{J:190} corresponding to $0 < \gamma \lesssim \gamma_0$ is dominated by
\begin{equation}\label{J:210}
  \left(\frac{\abs{I_0}}{\Nmin^{12}}\right)^{1/2}
  \left(rL_1L_2\right)^{1/2}
  \sum_{I_1,I_2}
  \bignorm{u_0}
  \bignorm{u_1^{I_1}}
  \bignorm{u_2^{I_2}}
  \qquad \left( u_j^{I_j} = \Proj_{\xi_j \cdot \omega \in I_j} u_j \right),
\end{equation}
where $I_1, I_2$ belong to the almost disjoint cover of $\R$ by translates of $I_0$, and the sum is restricted by the condition $(I_1 + I_2) \cap I_0 \neq \emptyset$, hence the sum is over a set of cardinality comparable to $\Nmin^{12}/\abs{I_0}$, and each $I_2$ can interact with at most three different $I_1$'s. Thus, sup'ing over $I_1$ and summing $I_2$ using the Cauchy-Schwarz inequality, we get the bound in the right hand side of \eqref{J:190}.

It remains to consider $\gamma_0 \ll \gamma \ll 1$. Then we use the estimate (proved below)
\begin{equation}\label{J:220}
  \norm{\Proj_{\xi_0 \cdot \omega \in I_0} \left(
  \Proj_{\R \times T_r(\omega)} u_1^{\gamma,\omega_1} \cdot u_2^{\gamma,\omega_2} \right)}
  \lesssim \left( \frac{rL_1L_2}{\gamma^2} \right)^{1/2} 
  \left( \sup_{I_1} \bignorm{u_1^{I_1}} \right)
  \bignorm{u_2^{\gamma,\omega_2}}.
\end{equation}
To avoid a logarithmic loss when summing $\gamma$, we repeat the argument from the end of section \ref{E}, with the difference that now
$$
  B^2 = \left(\sup_{I_1} \bignorm{u_1^{I_1}}\right)^2
  \sum_{\gamma} \sum_{\omega_1,\omega_2}
  \bignorm{u_2^{I_2;\gamma,\omega_2}}^2.
$$
Since $r \ll \Nmin^{12}\gamma$, we may assume $\theta(\pm_1\xi_1,\omega) \lesssim r/N_1 \ll \gamma$, so $\omega_1, \omega_2 \in \Omega(\gamma)$ are essentially uniquely determined, and $\theta(\omega_2,\omega) \ge (3/2)\gamma$, hence $\theta(\pm_2\xi_2,\omega) \sim \gamma$. Thus,
$$
  \sum_{\gamma} \sum_{\omega_1,\omega_2}
  \bignorm{u_2^{I_2;\gamma,\omega_2}}^2
  \sim
  \sum_{\gamma}
  \bignorm{\Proj_{\theta(\pm_2\xi_2,\omega) \sim \gamma} u_2^{I_2}}^2
  \sim \bignorm{u_2^{I_2}}^2,
$$
so the argument at the end of section \ref{E} goes through.

It remains to prove the claimed estimates \eqref{J:200}, \eqref{J:202} and \eqref{J:220}.

\subsection{Proof of \eqref{J:200} and \eqref{J:202}}

By Lemma \ref{B:Lemma2} it is enough to prove $\abs{E} \lesssim r\abs{I_0}\Lmin^{12}$ and $\abs{E} \lesssim \abs{I_0}L_1L_2/\gamma$, where $E$ satisfies \eqref{K:12} with
$$
  R = \bigl\{ \xi \in T_r(\omega) \colon \xi \cdot \omega \in I_1, \; \abs{\xi} \sim N_1, \; \theta(e_1,\omega_1) \le \gamma, \;
  \; \theta(e_2,\omega_2) \le \gamma \bigr\}
$$
and $I_1$ is a translate of $I_0$. Trivially, $\abs{R} \lesssim r\abs{I_0}$, proving \eqref{J:200}. To prove \eqref{J:202}, choose coordinates $\xi=(\xi^1,\xi^2)$ so that $\omega_1 = (1,0)$. Since $\gamma \ll 1$, it is clear that $e_1-e_2$ is roughly perpendicular to $\omega_1$, hence $\abs{\partial_2 f} = \abs{(e_1-e_2) \cdot (0,1)} \sim \abs{e_1-e_2} \sim \gamma$. So using \eqref{K:16} and integrating next in the $\xi^2$-direction, we get
$$
  \abs{E} \lesssim \Lmin^{12} \frac{\Lmax^{12}}{\gamma} \abs{\{\xi^1 : \xi^1 \in I_1\} } = \frac{\abs{I_0}L_1L_2}{\gamma},
$$
proving \eqref{J:202}.

\subsection{Proof of \eqref{J:220}}

To simplify the notation we shall write $u_j$ instead of $u_j^{\gamma,\omega_j}$. We want to prove that
\begin{equation}\label{J:250}
  \norm{\Proj_{\xi_0 \cdot \omega \in I_0} \left(u_1u_2\right)}
  \le C
  \left( \sup_{I_1} \norm{u_1^{I_1}} \right)
  \norm{u_2}
\end{equation} 
holds with $C^2 \sim r L_1 L_2/\gamma^2$, where the supremum is over all translates $I_1$ of $I_0$. Since $r \ll \Nmin^{12}\gamma$, we may assume (replacing $\omega$ by $-\omega$ if necessary) that $\Gamma_\phi(\omega) \subset \Gamma_\gamma(\omega_1)$, where
$$
  \phi \sim \frac{r}{N_1} \ll \gamma.
$$
Thus, $\supp\widehat{u_1} \subset \bigl[ \R \times T_{r}(\omega) \bigr] \cap K^{\pm_1}_{N_1,L_1,\phi,\omega}$ and $\supp\widehat{u_2} \subset K^{\pm_2}_{N_2,L_2,\gamma,\omega_2}$.

We shall make use of the following general facts:

\begin{lemma}\label{J:Lemma1} \emph{(\cite{Selberg:2008c}.)}
Given $\omega \in \mathbb S^1$, a compact interval $I_0$ and $S_1,S_2 \subset \R^{1+2}$, assume that for all $u_1,u_2 \in L^2(\R^{1+2})$ satisfying $\supp\widehat{u_j} \subset S_j$,
\begin{equation}\label{J:20}
  \norm{\Proj_{\xi_0 \cdot \omega \in I_0}(u_1u_2)}
  \le \left(A\abs{I_0}\right)^{1/2} \norm{u_1}\norm{u_2},
\end{equation}
where $A > 0$ is a constant. Assume further that there exist $d > 0$, $c \in \R$ and a compact interval $J$ such that
\begin{gather}
  \label{J:24}
  d \lesssim \abs{J}, \qquad
  S_2 \subset J \times \R^2,
  \\
  \label{J:28}
  S_2 \subset \left\{ (\tau_2,\xi_2) : -\tau_2 + \xi_2 \cdot \omega = c + O(d) \right\}.
\end{gather}
Then \eqref{J:250} holds with $C^2 \sim A \abs{J}$.
\end{lemma}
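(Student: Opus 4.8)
\textbf{Proof proposal for Lemma \ref{J:Lemma1}.}

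The plan is to reduce the estimate with the output restriction $\xi_0 \cdot \omega \in I_0$ to the unrestricted bilinear estimate \eqref{J:20} by a tiling/orthogonality argument in the single variable $\xi_0 \cdot \omega$, and then to translate that tiling back to a tiling of the \emph{input} frequency $\xi_1 \cdot \omega$ using the null-hyperplane localization \eqref{J:28} of $S_2$. First I would partition $\R$ into an almost disjoint cover by translates $I_0 + k\abs{I_0}$, $k \in \Z$, of $I_0$, and likewise fix a cover of $\R$ by translates $\{I_1^{(m)}\}$ of $I_0$; write $u_1^{(m)} = \Proj_{\xi_1 \cdot \omega \in I_1^{(m)}} u_1$ and $u_2^{(k)} = \Proj_{\xi_2 \cdot \omega \in I_0 + k\abs{I_0}} u_2$, so that $u_1 = \sum_m u_1^{(m)}$ and $u_2 = \sum_k u_2^{(k)}$ with orthogonality $\norm{u_1}^2 \sim \sum_m \norm{u_1^{(m)}}^2$, and similarly for $u_2$.

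The key combinatorial point is that the value of $\xi_0 \cdot \omega = \xi_1 \cdot \omega + \xi_2 \cdot \omega$ is essentially determined, up to $O(\abs{I_0})$, once we know which translate $\xi_1 \cdot \omega$ and $\xi_2 \cdot \omega$ lie in; so if we fix the output strip $\xi_0 \cdot \omega \in I_0$, then for each $m$ there are only $O(1)$ values of $k$ for which $\Proj_{\xi_0\cdot\omega\in I_0}(u_1^{(m)} u_2^{(k)})$ is nonzero, and conversely each $k$ pairs with $O(1)$ values of $m$. Here is where hypothesis \eqref{J:28} enters: on $S_2$ the quantity $\xi_2 \cdot \omega$ equals $\tau_2 + c + O(d)$ and $\tau_2$ ranges over $J$ with $\abs{J} \gtrsim d$, so the $\xi_2 \cdot \omega$-support of $\widehat{u_2}$ is contained in an interval of length $\sim \abs{J}$; thus only $O(\abs{J}/\abs{I_0})$ of the strips $I_0 + k\abs{I_0}$ meet it. The scheme is then: bound $\norm{\Proj_{\xi_0\cdot\omega\in I_0}(u_1u_2)}^2$ by $\sum_{k}\bignorm{\Proj_{\xi_0\cdot\omega\in I_0}\sum_{m : (I_1^{(m)}+I_0+k|I_0|)\cap I_0 \neq\emptyset} u_1^{(m)}u_2^{(k)}}^2$ using almost-orthogonality of the pieces $\Proj_{\xi_0\cdot\omega \in I_0}(u_1^{(m)}u_2^{(k)})$ over $k$ (for fixed output strip the pieces are disjointly supported in $\xi_0\cdot\omega$ only after we further tile; more simply, one sums over $k$ in $\ell^1$ using that only $O(\abs J/\abs{I_0})$ values of $k$ occur and Cauchy--Schwarz costs a factor $\abs J/\abs{I_0}$), apply \eqref{J:20} to each pair with the $O(1)$ overlap in $m$, and then reassemble via the orthogonality $\sum_m \norm{u_1^{(m)}}^2 \sim \norm{u_1}^2 $ while estimating $\norm{u_1^{(m)}} \le \sup_{I_1}\norm{u_1^{I_1}}$ and $\sum_k \norm{u_2^{(k)}}^2 \sim \norm{u_2}^2$. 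Tracking constants: \eqref{J:20} contributes $A\abs{I_0}$, the $\xi_2$-tiling contributes a factor $\abs{J}/\abs{I_0}$, and the product is $A\abs{J}$, giving $C^2 \sim A\abs{J}$ as claimed.

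The main obstacle I anticipate is making the orthogonality bookkeeping clean: one must be careful that after fixing the output strip $I_0$ the remaining sum over $(m,k)$ genuinely has $O(1)$ overlap in \emph{each} index separately, and that replacing $\norm{u_1^{(m)}}$ by the supremum $\sup_{I_1}\norm{u_1^{I_1}}$ is only done \emph{after} extracting one factor of $\bigl(\sum_m\norm{u_1^{(m)}}^2\bigr)^{1/2}$ worth of orthogonality from the $u_2$ side — otherwise one loses the $\abs{J}/\abs{I_0}$ versus $(\abs{J}/\abs{I_0})^{1/2}$ distinction and the constant comes out wrong. The cleanest route is probably to dualize: test against $u_0$ with $\norm{u_0}=1$ and $\widehat{u_0}$ supported in $\xi_0\cdot\omega\in I_0$, expand $\iint \overline{u_0}\,u_1 u_2$, insert the tilings of $u_1$ in $\xi_1\cdot\omega$ and $u_2$ in $\xi_2\cdot\omega$, observe that the constraint $\xi_0=\xi_1+\xi_2$ together with $\xi_0\cdot\omega\in I_0$ forces, for each $(m)$, only $O(1)$ admissible $(k)$ and vice versa, apply \eqref{J:20} termwise, and finish with two Cauchy--Schwarz inequalities — one in $m$ absorbing $\sup_{I_1}\norm{u_1^{I_1}}$ and the finitely-overlapping structure, one in $k$ against $\bigl(\sum_k\norm{u_2^{(k)}}^2\bigr)^{1/2}\sim\norm{u_2}$ — the factor $\abs{J}/\abs{I_0}$ emerging from the number of nonempty $k$-strips compelled by \eqref{J:24} and \eqref{J:28}.
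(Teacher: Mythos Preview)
Your argument is essentially correct, and in fact the paper does not prove this lemma at all --- it is simply quoted from \cite{Selberg:2008c} --- so there is no in-paper proof to compare against. The tiling of $\xi_1\cdot\omega$ and $\xi_2\cdot\omega$ by translates of $I_0$, together with the observation that \eqref{J:24} and \eqref{J:28} force $\xi_2\cdot\omega=\tau_2+c+O(d)$ to range over an interval of length $\sim\abs{J}$ (hence only $O(\abs{J}/\abs{I_0})$ strips in $k$ survive), is exactly the mechanism that produces $C^2\sim A\abs{J}$, and this is the standard proof.

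One small cleanup: in your final ``cleanest route'' you say two Cauchy--Schwarz inequalities are needed, but only one is. After applying \eqref{J:20} termwise you have
\[
  \sum_{k}\;\sum_{m:\,m\sim k}\;(A\abs{I_0})^{1/2}\,\bignorm{u_1^{(m)}}\,\bignorm{u_2^{(k)}}
  \;\lesssim\;
  (A\abs{I_0})^{1/2}\Bigl(\sup_{I_1}\bignorm{u_1^{I_1}}\Bigr)\sum_{k}\bignorm{u_2^{(k)}},
\]
since for each $k$ there are only $O(1)$ admissible $m$. Now a single Cauchy--Schwarz in $k$ over the $O(\abs{J}/\abs{I_0})$ nonempty terms gives $(\abs{J}/\abs{I_0})^{1/2}\bigl(\sum_k\norm{u_2^{(k)}}^2\bigr)^{1/2}\sim(\abs{J}/\abs{I_0})^{1/2}\norm{u_2}$, and you are done. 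If you instead also Cauchy--Schwarz in $m$ you land on $\bigl(\sum_m\norm{u_1^{(m)}}^2\bigr)^{1/2}\sim\norm{u_1}$ rather than $\sup_{I_1}\norm{u_1^{I_1}}$, which is not what \eqref{J:250} asks for.
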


We also need:

\begin{lemma}\label{J:Lemma2} \emph{(\cite{Selberg:2008c}.)}
Suppose $\omega \in \mathbb S^1$, $I_0$ is a compact interval, $S_1,S_2 \subset \R^{1+2}$ and $S_1 \subset T_1$, where $T_1 \subset \R^{1+2}$ is an approximate tiling set with the doubling property. If
\begin{equation}\label{J:40}
  \norm{\Proj_{\xi_0 \cdot \omega \in I_0}(u_1\Proj_{T_2}u_2)}
  \le C_0
  \left( \sup_{I_1}\norm{u_1^{I_1}} \right)
  \norm{\Proj_{T_2}u_2}
\end{equation}
for all translates $T_2$ of $T_1$, all translates $I_1$ of $I_0$ and all $u_1,u_2 \in L^2(\R^{1+2})$ satisfying $\supp\widehat{u_j} \subset S_j$, then \eqref{J:250} also holds, with a constant $C$ depending on $C_0$ and the size of the overlap of the doubling cover by $T_1$.
\end{lemma}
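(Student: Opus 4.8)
The plan is to decompose $u_2$ on the Fourier side into pieces whose supports are subordinate to the tiling cover associated to $T_1$, apply the hypothesis \eqref{J:40} to each piece with the appropriate translate of $T_1$, and then reassemble the outputs, using the doubling property to keep the overlap under control.

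First I would fix the lattice $E \subset \R^{1+2}$ furnished by the approximate tiling structure of $T_1$, so that $\{X+T_1\}_{X\in E}$ covers $\R^{1+2}$ with $O(1)$ overlap and, by the doubling hypothesis, $\{X+T_1^*\}_{X\in E}$ (with $T_1^* = T_1 + T_1$) also covers $\R^{1+2}$, with overlap bounded by some $M\in\N$. Since $E$ is countable, I would enumerate it and greedily carve out an almost disjoint measurable partition $\R^{1+2} = \bigcup_{X\in E} P_X$ with $P_X \subset X+T_1$, concretely $P_{X^{(k)}} = (X^{(k)}+T_1)\setminus\bigcup_{j<k}(X^{(j)}+T_1)$. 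Setting $u_2^X = \Proj_{P_X} u_2$, we get $\supp\widehat{u_2^X}\subset S_2\cap P_X \subset S_2$, the decomposition $u_2 = \sum_{X\in E} u_2^X$, and (by orthogonality of the pieces) $\sum_{X\in E}\norm{u_2^X}^2 = \norm{u_2}^2$.

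Next, since $\supp\widehat{u_1}\subset S_1\subset T_1$, the convolution formula \eqref{A:130} shows that $\widehat{u_1 u_2^X}$ is supported in $T_1 + P_X \subset X+T_1^*$, hence so is $\widehat{\Proj_{\xi_0\cdot\omega\in I_0}(u_1 u_2^X)}$. The doubling property then guarantees that these supports overlap with multiplicity at most $M$, so the Cauchy-Schwarz inequality together with Plancherel gives
$$
  \bignorm{\Proj_{\xi_0\cdot\omega\in I_0}(u_1 u_2)}^2
  = \Bignorm{\sum_{X\in E}\Proj_{\xi_0\cdot\omega\in I_0}(u_1 u_2^X)}^2
  \lesssim M \sum_{X\in E} \bignorm{\Proj_{\xi_0\cdot\omega\in I_0}(u_1 u_2^X)}^2.
$$
For each fixed $X$ I would then apply \eqref{J:40} with $T_2 = X+T_1$ and with $u_2^X$ in place of $u_2$, which is admissible because $\supp\widehat{u_2^X}\subset S_2$ and $\Proj_{T_2} u_2^X = u_2^X$ since $P_X\subset X+T_1 = T_2$; this yields $\bignorm{\Proj_{\xi_0\cdot\omega\in I_0}(u_1 u_2^X)} \le C_0 \bigl(\sup_{I_1}\bignorm{u_1^{I_1}}\bigr) \bignorm{u_2^X}$. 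Substituting this into the displayed inequality, pulling the $u_1$-factor (which does not depend on $X$) out of the sum, and using $\sum_X\norm{u_2^X}^2 = \norm{u_2}^2$, we arrive at \eqref{J:250} with $C \sim M^{1/2} C_0$, as claimed.

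The only step carrying genuine content is the quasi-orthogonal reassembly: it is precisely the doubling hypothesis on $T_1$ — not merely the tiling property — that forces the Fourier supports of the bilinear pieces $u_1 u_2^X$ to overlap with multiplicity $O(1)$, so that summing them costs only a constant rather than a logarithm in the (infinite) number of tiles. Everything else, namely the construction of the partition subordinate to the cover and the bookkeeping with the multipliers $\Proj_{T_2}$, is routine, so I do not anticipate any serious obstacle beyond being careful with that one point.
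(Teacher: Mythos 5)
Your argument is correct and is exactly the intended one (the paper defers the proof to \cite{Selberg:2008c}, but the statement's remark that $C$ depends on ``the size of the overlap of the doubling cover'' is precisely your $C \sim M^{1/2}C_0$): partition Fourier space subordinate to the tiles, apply \eqref{J:40} tile by tile with $T_2 = X+T_1$, and reassemble using that $\supp\widehat{u_1 u_2^X}\subset X+T_1^*$ together with the $O(1)$ overlap of the doubled cover and Cauchy--Schwarz on the Fourier side. No gaps.
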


Finally, recalling the notation $H_d(\omega) = \left\{ (\tau,\xi) \colon \abs{-\tau + \xi \cdot \omega} \lesssim d \right\}$ for a thickened null hyperplane, we note the elementary fact that
\begin{equation}\label{ConePlane}
  \dot K^{\pm}_{N,L,\gamma}(\omega) \subset H_{\max(L,N\gamma^2)}(\omega).
\end{equation}

We now apply the above lemmas to the specific situation that we are considering, namely
$$
  S_1 = \bigl[ \R \times T_{r}(\omega) \bigr] \cap K^{\pm_1}_{N_1,L_1,\phi,\omega},
  \qquad
  S_2 = K^{\pm_2}_{N_2,L_2,\gamma,\omega_2}.
$$
By \eqref{J:200} and \eqref{J:202}, we know that \eqref{J:20} holds with
$$
  A \sim \min\left(\frac{L_1L_2}{\gamma}, r \Lmin^{12} \right)
$$
By \eqref{ConePlane}, $S_1 \subset T_1$, where
$$
  T_1 = H_d(\omega) \cap \bigl( \R \times T_{r}(\omega) \bigr)
  \qquad
  d = \max(L_1,N_1\phi^2).
$$
Then $T_1$ is an approximate tiling set with the doubling property, so Lemma \ref{J:Lemma2} allows us to replace $S_2$ by $S_2 \cap T_2$ in Lemma \ref{J:Lemma1}, where $T_2 = (\tau_0,\xi_0) + T_1$ for some $(\tau_0,\xi_0) \in \R^{1+2}$. Clearly, \eqref{J:28} holds with $S_2$ replaced by $S_2 \cap T_2$, and with $c = -\tau_0+\xi_0 \cdot \omega$, so it only remains to prove the existence of an interval $J$ such that $S_2 \cap T_2 \subset J \times \R^2$ and
$$
  \abs{J} \sim \max\left(\frac{r}{\gamma}, \frac{\Lmax^{12}}{\gamma^2} \right) \gg d,
$$
where the last inequality holds by the definitions of $d$ and $\phi$ (recall that $N_1\phi^2 \sim r\phi \ll r\gamma$). The proof given in \cite{Selberg:2008c} for the existence of $J$ for the 3d case applies also in 2d, however, with only the obvious modifications. (This is where the assumption $L_2 \ll N_2$ is used.) The proof of Theorem \ref{N:Thm4} is now complete.

\section{Proof of the estimate for thickened circles}\label{KK}

Here we prove Lemma \ref{A:Lemma1}, i.e.
\begin{equation}\label{KK:18}
  \abs{\mathbb S^1_\delta(r) \cap [ \xi_0 + \mathbb S^1_\Delta(R)]}
  \lesssim \left[ \frac{rR\delta\Delta }{\abs{\xi_0}} \min(\delta,\Delta) \right]^{1/2}
\end{equation}
for $0 < \delta \ll r$, $0 < \Delta \ll R$ and $\xi_0 \in \R^2 \setminus \{0\}$.
Choose coordinates $\xi = (x,y)$ on $\R^2$ so that $\xi_0 = (\abs{\xi_0},0) \neq 0$. Then $\xi \in \mathbb S^1_\delta(r) \cap [ \xi_0 + \mathbb S^1_\Delta(R)]$ if and only if
$$
  (r-\delta)^2 < x^2 + y^2 < (r+\delta)^2
$$
and
$$
  (R-\Delta)^2 < (x-\abs{\xi_0})^2 + y^2 < (R+\Delta)^2.
$$
Subtracting these inequalities, we find that
\begin{multline*}
  \xi \in \mathbb S^1_\delta(r) \cap [ \xi_0 + \mathbb S^1_\Delta(R)]
  \implies x \in (a,b),
  \\
  \text{where} \quad
  \left\{
  \begin{aligned}
  a &= \frac{1}{2\abs{\xi_0}} \left[\abs{\xi_0}^2+r^2-R^2+\delta^2-\Delta^2-2(r\delta+R\Delta)\right],
  \\
  b &= \frac{1}{2\abs{\xi_0}} \left[\abs{\xi_0}^2+r^2-R^2+\delta^2-\Delta^2+2(r\delta+R\Delta)\right].
  \end{aligned}
\right.
\end{multline*}
But $b-a = 2(r\delta+R\Delta)/\abs{\xi_0}$, and by symmetry we may assume that $\delta \le \Delta$, so applying Lemma \ref{KK:Lemma2} below, we complete the proof as follows: First, if $\delta r \lesssim R\Delta$, then \eqref{KK:18} follows from \eqref{KK:20} in the lemma. Second, if $\delta r \gg R\Delta$, then $r \gg R(\Delta/\delta) \ge R$, so $\mathbb S^1_\delta(r) \cap [ \xi_0 + \mathbb S^1_\Delta(R) ]$ is empty unless $\abs{\xi_0} \sim r$. But the latter implies $b-a \sim \delta$, hence \eqref{KK:18} follows from \eqref{KK:22} applied with $(r,\delta)$ replaced by $(R,\Delta)$.

It then only remains to prove the following:

\begin{lemma}\label{KK:Lemma2} Let $a,b \in \R$ with $a < b$, and let $0 < \delta \ll r$. Then
\begin{align}
  \label{KK:20}
  &\abs{\mathbb S^1_\delta(r) \cap \left\{ \xi \in \R^2 \colon a < x < b \right\}}
  \lesssim \delta\sqrt{r(b-a)},
  \\
  \label{KK:22}
  &\abs{\mathbb S^1_\delta(r) \cap \left\{ \xi \in \R^2 \colon a < x < b \right\}}
  \lesssim (b-a)\sqrt{r\delta}
\end{align}
\end{lemma}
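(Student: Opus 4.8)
The claim is about the area of the region of the thickened circle $\mathbb S^1_\delta(r)$ lying in a vertical strip $a < x < b$ of width $w := b-a$. I would parametrize $\mathbb S^1_\delta(r)$ in the obvious way: a point is $\xi = (x,y)$ with $x^2+y^2 = \rho^2$ for some $\rho \in (r-\delta, r+\delta)$, and $|y| = \sqrt{\rho^2 - x^2}$. The two estimates \eqref{KK:20} and \eqref{KK:22} trade off the same area against the two natural ``small'' quantities $\delta$ and $w$: \eqref{KK:20} is sharp when $w \gtrsim \delta$ (the strip is wide, so the binding constraint is the thickness $\delta$ together with the spread in $x$), while \eqref{KK:22} is sharp when $w \lesssim \delta$ (the strip is thin, so the binding constraint is the width $w$).

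\textbf{Estimate \eqref{KK:20}.} Since $\delta \ll r$, the annulus $\mathbb S^1_\delta(r)$ has radial thickness comparable to $\delta$ everywhere, so its area inside the strip is comparable to $\delta$ times the arclength of the circle $|\xi| = r$ inside the strip $a < x < b$. Thus it suffices to bound that arclength by $\sqrt{r w}$. Writing the upper semicircle as $y = \sqrt{r^2 - x^2}$, the arclength element is $\sqrt{1 + (y')^2}\,dx = \frac{r}{\sqrt{r^2-x^2}}\,dx = \frac{r}{|y|}\,dx$. Integrating over $x \in (a,b) \cap (-r,r)$, the worst case is when the strip abuts the turning point $x = \pm r$, where $|y|$ is smallest. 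A direct computation (substitute $x = r\cos\phi$, or just bound $\int_{r-w}^{r} \frac{r\,dx}{\sqrt{r^2-x^2}} = \int_{r-w}^r \frac{r\,dx}{\sqrt{(r-x)(r+x)}} \lesssim \sqrt{r}\int_0^w \frac{ds}{\sqrt{s}} \sim \sqrt{rw}$) gives the bound $\sqrt{rw}$; away from the turning points the integrand is even better behaved. Multiplying by $\delta$ yields \eqref{KK:20}.

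\textbf{Estimate \eqref{KK:22}.} Here I would argue differently, integrating in $x$ over the strip and bounding the vertical extent of $\mathbb S^1_\delta(r)$ at each fixed $x$. For fixed $x$ with $|x| \le r+\delta$, the set of admissible $y$ is $\{ y : (r-\delta)^2 \le x^2 + y^2 \le (r+\delta)^2 \}$, whose one-dimensional measure is at most a constant times $\sqrt{r\delta}/\sqrt{\,\bigabs{|x| - r} + \delta\,}$ when $|x| \le r$, and is even smaller (indeed it is an interval of length $\lesssim \sqrt{r\delta}$ not containing $0$) when $|x| \ge r$; in all cases it is $\lesssim \sqrt{r\delta}$, since $\sqrt{(r+\delta)^2 - x^2} - \sqrt{(r-\delta)^2 - x^2} \lesssim \sqrt{r\delta}$ uniformly in $x$ (the extreme case being $x$ near $r$, where the inner radius term vanishes and the outer is $\sqrt{(r+\delta)^2 - x^2} \lesssim \sqrt{r\delta}$). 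Integrating this uniform bound $\sqrt{r\delta}$ over the $x$-interval of length $w = b-a$ gives $\abs{\mathbb S^1_\delta(r) \cap \{ a < x < b \}} \lesssim w\sqrt{r\delta}$, which is \eqref{KK:22}.

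\textbf{Main obstacle.} Neither estimate is deep; the only mildly delicate point is the verification that the vertical slice of the thickened annulus has length $\lesssim \sqrt{r\delta}$ \emph{uniformly} in $x$, including near the turning point $x = r$ where the circle is vertical and one might naively fear a blow-up. The resolution is that although the slope $dy/dx$ blows up there, the radial thickness is being measured across the annulus at an angle, and the quantity $\sqrt{(r+\delta)^2 - x^2}$ is itself only of size $\sqrt{r\delta}$ once $x \ge r - \delta$, while for $x$ further from $r$ the difference of the two square roots is controlled by $\frac{(r+\delta)^2 - (r-\delta)^2}{\sqrt{(r-\delta)^2 - x^2}} = \frac{4r\delta}{\sqrt{(r-\delta)^2-x^2}}$, which is $\lesssim \sqrt{r\delta}$ precisely when $\sqrt{(r-\delta)^2 - x^2} \gtrsim \sqrt{r\delta}$, i.e. when we are \emph{not} in the turning-point regime. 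Splitting into these two regimes handles it cleanly. For \eqref{KK:20} the analogous care is needed at the turning point, but there the integrable singularity $1/\sqrt{s}$ saves the day, as shown above.
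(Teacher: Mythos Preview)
Your argument for \eqref{KK:22} is correct: the vertical slice of the annulus at each $x$ has length $\lesssim \sqrt{r\delta}$ (your two-regime split handles the turning point cleanly), and integrating over the strip of width $w=b-a$ gives the bound. This is essentially the paper's reasoning as well.

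Your argument for \eqref{KK:20}, however, has a genuine gap. The assertion ``area $\lesssim \delta \times$ (arclength of the circle $\abs{\xi}=r$ inside the strip)'' is not a valid upper bound near the turning point. Concretely, take $(a,b)=(r-\epsilon,\,r+\delta)$ with $0<\epsilon\ll\delta$: the central circle meets this strip only in $(r-\epsilon,r)$, with arclength $\sim\sqrt{r\epsilon}$, so your heuristic gives $\delta\sqrt{r\epsilon}$; but the actual area of the annulus in the strip is $\sim\delta\sqrt{r\delta}$, which is larger by the factor $\sqrt{\delta/\epsilon}$. The issue is that circles of radius $\rho>r$ penetrate farther into the strip than the central circle does, so the central arclength is not representative.

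The fix is immediate and keeps your geometric viewpoint: write the area in polar coordinates as $\int_{r-\delta}^{r+\delta} L(\rho)\,d\rho$, where $L(\rho)$ is the arclength of $\abs{\xi}=\rho$ inside the strip. Your own arclength computation shows $L(\rho)\lesssim\sqrt{\rho\,w}\sim\sqrt{rw}$ uniformly for $\rho\in(r-\delta,r+\delta)$, and integrating over the radial window of length $2\delta$ gives \eqref{KK:20}.

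By contrast, the paper works entirely in Cartesian coordinates and treats both inequalities at once. After reducing to $0\le a<b\le r+\delta$ and splitting at $b=r-\delta$, in the main case $b\le r-\delta$ it computes
\[
  2\int_a^b\bigl(y^*-y_*\bigr)\,dx \sim \frac{\delta\sqrt{r}\,(b-a)}{\sqrt{r+\delta-a}},
\]
from which \eqref{KK:20} and \eqref{KK:22} follow simultaneously via $r+\delta-a\ge b-a$ and $r+\delta-a\ge 2\delta$. The turning-point case $b>r-\delta$ is handled separately by bounding $y^*\lesssim\sqrt{r\delta}$ directly. So the paper's single exact computation yields both estimates, whereas your approach separates them; either route is fine once the gap above is patched.
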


\begin{proof} Without loss of generality assume $0 \le a < b \le r+\delta$. We split into the cases (i) $b \le r-\delta$ and (ii) $r-\delta < b \le r+\delta$.

Set $y^* = \sqrt{(r+\delta)^2-x^2}$ and $y_* = \sqrt{(r-\delta)^2-x^2}$.

In case (i) we calculate the area as
\begin{multline*}
  2\int_a^b ( y^* - y_* ) d x
  \sim
  \int_a^b \frac{r\delta}{y^* + y_*} d x
  \sim
  \int_a^b \frac{r\delta}{\sqrt{r}\sqrt{r+\delta-x}} d x
  \\
  \sim
  \delta\sqrt{r}\left(\sqrt{r+\delta-a}-\sqrt{r+\delta-b}\right)
  \sim
  \delta\sqrt{r}\frac{b-a}{\sqrt{r+\delta-a}}.
\end{multline*}
But $r+\delta-a \ge b - a$, proving \eqref{KK:20}. On the other hand, $r+\delta-a \ge 2\delta$, since $a < b \le r-\delta$, so we have proved also \eqref{KK:22}.

In case (ii) we can set $a=r-\delta$, since the interval $a \le x \le r-\delta$ is covered by case (i). Therefore, the area is
$$
  2\int_a^b y^* d x 
  \sim
  \int_a^b \sqrt{r}\sqrt{r+\delta-x} d x
  \lesssim \int_a^b \sqrt{r}\sqrt{\delta} d x
  \le \sqrt{r\delta} (b-a)
  \lesssim \delta\sqrt{r(b-a)}.
$$
Here we used $r-\delta \le x \le r+\delta$ and $b-a \le 2\delta$.
\end{proof}

\bibliographystyle{amsalpha}
\bibliography{bibliography}

\end{document}